\newcommand{\cmark}{\ding{51}}
\newcommand{\xmark}{\ding{55}}
\newcolumntype{r}{D{.}{.}{-1}}
\newtheorem{theorem}{Theorem}[section]
\theoremstyle{plain}
\newtheorem{corollary}[theorem]{Corollary}
\theoremstyle{definition}
\newtheorem{definition}[theorem]{Definition}
\newtheorem{example}[theorem]{Example}
\newtheorem{construction}[theorem]{Construction}
\theoremstyle{plain}
\newtheorem{lemma}[theorem]{Lemma}
\newtheorem{proposition}[theorem]{Proposition}
\theoremstyle{remark}
\newtheorem{remark}[theorem]{Remark}
\numberwithin{equation}{section}
\DeclareMathOperator{\trop}{trop}
\DeclareMathOperator{\pearl}{pearl}
\DeclareMathOperator{\ev}{ev}
\DeclareMathOperator{\val}{val}
\DeclareMathOperator{\mult}{mult}
\DeclareMathOperator{\Aut}{Aut}
\DeclareMathOperator{\Coef}{Coef}
\newcommand {\PP}{{\mathbb P}}
\newcommand {\RR}{{\mathbb R}}
\newcommand {\ZZ}{{\mathbb Z}}
\newcommand {\NN}{{\mathbb N}}
\newcommand {\TT}{{\mathbb T}}
\begin{document}
\title[Counts of (tropical) curves in $E\times \mathbb{P}^1$ and Feynman integrals]{Counts of (tropical) curves in $E\times \mathbb{P}^1$ and Feynman integrals}
\author[Janko B\"{o}hm, Christoph Goldner, Hannah Markwig]{Janko B\"{o}hm, Christoph Goldner, Hannah Markwig}
\address{Janko B\"ohm, Fachbereich Mathematik,
Universit\"{a}t Kaiserslautern, Postfach 3049, 67653 Kaiserslautern, Germany}
\email{\href{mailto:boehm@mathematik.uni-kl.de}{boehm@mathematik.uni-kl.de}}
\address{Hannah Markwig, Eberhard Karls Universit\"at T\"ubingen, Fachbereich Mathematik}
\email{\href{mailto:hannah@math.uni-tuebingen.de}{hannah@math.uni-tuebingen.de}}
\address{Christoph Goldner, Eberhard Karls Universit\"at T\"ubingen, Fachbereich Mathematik}
\email{\href{mailto:christoph.goldner@math.uni-tuebingen.de}{christoph.goldner@math.uni-tuebingen.de}}

\thanks{2010 Math subject classification: Primary 14N35, 14T05, 81T18; Secondary 11F11, 14J27, 14N10, 14J81}
\keywords{Elliptic fibrations, Feynman integral, tropical geometry, Gromov-Witten invariants, quasimodular forms}

\begin{abstract}
We study generating series of Gromov-Witten invariants of $E\times\PP^1$ and their tropical counterparts. Using tropical degeneration and floor diagram techniques, we can express the generating series as sums of Feynman integrals, where each summand 
corresponds to a certain type of graph which we call a \emph{pearl chain}. The individual summands are --- just as in the case of mirror symmetry of elliptic curves, where the generating series of Hurwitz numbers equals a sum of Feynman integrals --- complex analytic path integrals involving a product of propagators (equal to the Weierstrass-$\wp$-function plus an Eisenstein series). 
We also use pearl chains to study generating functions of counts of tropical curves in $E_\TT\times\PP^1_\TT$ of so-called \textit{leaky degree}.
\end{abstract}
\maketitle

\section{Introduction}

\subsection{Generating series of Grommov-Witten invariants of $E\times\PP^1$ and Feynman integrals}
We study generating series of Gromov-Witten invariants of $E\times\PP^1$. These can be viewed as counts of curves in $E\times\PP^1$ of fixed bidegree $(d_1,d_2)$ and genus $g$ and satisfying generic point conditions. We denote such a number by $N_{(d_1,d_2,g)}$. 

Our main result states that, for fixed $d_2$ and $g$, the generating series $\sum_{d_1}N_{(d_1,d_2,g)} q^{d_1}$ equals a sum of Feynman integrals (see Corollary \ref{cor-genseries=Feynman,class}):

\begin{equation}\sum_{d_1}N_{(d_1,d_2,g)} q^{d_1} = \sum_{\mathcal{P}}\frac{1}{|\Aut(\mathcal{P})|} I_{\mathcal{P}}(q).\label{eq-mainresult}
\end{equation}

A Feynman integral $I_{\mathcal{P}}(q)$ can be viewed as a path integral of a product of propagator functions involving the Weierstra\ss{}-$\wp$-function and an Eisenstein series in a Cartesian product of elliptic curves. Alternatively, it can be viewed as the constant coefficient of a series involving the analogous product of propagators after a coordinate change. The way the product of propagators is given depends on a graph $\mathcal{P}$. 

In Equation (\ref{eq-mainresult}), we have to sum over particular graphs $\mathcal{P}$ which we call \emph{pearl chains} (see Definition \ref{def-pearl}). For fixed $d_2$ and $g$, there is a finite list of pearl chains of type $(d_2,g)$.

Our study is inspired by Dijkgraaf's famous mirror symmetry theorem for elliptic curves relating generating series of Hurwitz numbers and Feynman integrals \cite{Dij95}. A Hurwitz number is a count of simply ramified covers of an elliptic curve of fixed genus $g$ and degree $d$. We denote such a number by $N_{d,g}$.
The mirror symmetry theorem for elliptic curves states that, for fixed $g\geq 2$:

\begin{equation} \sum_d N_{d,g}q^d  = \sum_{\Gamma} \frac{1}{|\Aut(\Gamma)|} I_{\Gamma}(q). \label{eq-mirrorsymm} \end{equation}

Here, the sum on the right goes over all $3$-valent connected graphs $\Gamma$ of genus $g$.

It is interesting that our generating series of Gromov-Witten invariants of $E\times\PP^1$ can be expressed as a sum over the same kind of Feynman integrals, it is only the graphs over which we sum that changes when comparing Equation (\ref{eq-mirrorsymm}) for Hurwitz numbers to Equation (\ref{eq-mainresult}) for Gromov-Witten invariants of $E\times\PP^1$.

The mirror symmetry relation (\ref{eq-mirrorsymm}) was used in \cite{Dij95, KZ95} to prove that the  generating function of Hurwitz numbers is a quasimodular form of weight $6g-6$. Quasimodularity behaviour is desirable because it controls the asymptotic of the generating function. Recently, the quasimodularity result for generating series of Hurwitz numbers was generalized in \cite{OP17} to cycle-valued generating series involving arbitrary Gromov-Witten invariants of an elliptic curve (see Theorem 2 and Corollary 1 in \cite{OP17}). There, Oberdieck and Pixton also considered elliptic fibrations and conjecture a quasimodularity statement for cycle-valued generating series of Gromov-Witten invariants (Conjecture A \cite{OP17}), which they prove for the case of products $E\times B$ (Corollary 2).

Besides giving an explicit description for generating series of Gromov-Witten invariants of $E\times\PP^1$, our Equation (\ref{eq-mainresult}) also hands us a new way to study quasimodularity by making use of Feynman integrals: the quasimodularity of a summand $I_{\mathcal{P}}$ 
for a fixed graph $\mathcal{P}$ can be deduced from \cite{OP17} (see Theorem \ref{thm-quasimod homo}).

We rely on tropical geometry to prove Equation (\ref{eq-mainresult}). This method also gives an interpretation of a summand corresponding to a graph $\mathcal{P}$ on the left hand side of Equation (\ref{eq-mainresult}): it can be viewed as a generating series counting stable maps with a fixed underlying graph $\mathcal{P}$ close to the tropical limit.

\subsection{Tropical curve counts, floor diagrams and curled pearl chains}

We prove a corres-pondence theorem stating the equality of the Gromov-Witten invariant $N_{(d_1,d_2,g)}$ to its tropical counterpart (see Theorem \ref{thm-corres}).
Tropical geometry can be viewed as a degenerate version of algebraic geometry and has become an important tool in (log-) Gromov-Witten theory and curve counts (see e.g.\ \cite{Ran15, BG14, BBM11}), starting with Mikhalkin's breakthrough \cite{Mi03} where he proved the first correspondence theorem.

We then use floor diagram techniques to relate counts of tropical curves to counts of \emph{curled pearl chains} (see Definition \ref{def-curledpearl}) --- these can essentially be viewed as (combinatorial types of) tropical covers of a tropical elliptic curve with a particular source graph, namely a pearl chain.

Floor diagrams are a way to carve out the combinatorial essence of a tropical curve count. They can also be viewed as the graphs illustrating the ultimate use of the degeneration formula for Gromov-Witten invariants \cite{Li02, Bexp, AB14, CJMR17}.
They were introduced for counts of curves in $\PP^2$ by Brugall\'e{}-Mikhalkin \cite{BM:Pn}, and further investigated by Fomin-Mikhalkin \cite{FM09}, leading to new results about node polynomials. The results were generalized to other toric surfaces in \cite{AB13}.

We also study curves of \textit{leaky degree} (see Subsection \ref{subsec-generalization}), which will be useful for generalizations to curve counts in $E\times\mathbb{P}^1$ for which we impose tangency conditions relative to the $0$- and $\infty$-section. Our methods apply to these curves as well.

In \cite{BG14b, CJMR17}, the floor diagram technique is compared to the Fock space technique, in which Feynman diagrams for operators are used to provide generating series of counts of curves on surfaces. Being well-known for counts of covers \cite{OP06}, the Fock space approach for counts of curves on surfaces was pioneered by Cooper and Pandharipande in \cite{CP12}. By applying the techniques of \cite{BG14b, CJMR17, CJMR16, BGM18}, we believe that the trace formula (Theorem 3 of \cite{CP12}) for generating series of counts of curves in $E\times\mathbb{P}^1$ can be deduced from our formula \ref{eq-mainresult} and vice versa.

\subsection{Tropical mirror symmetry of an elliptic curve and beyond}

The well-known Gross-Siebert program for mirror symmetry aims at constructing new mirror pairs and providing an algebraic framework for SYZ-mirror symmetry \cite{GS06, GS07, SYZ}. For a pair of an algebraic variety $X$ and a mirror $X^\vee$, we can express invariants of one in terms of the other. In particular, we can hope to express a generating series of Gromov-Witten invariants in terms of integrals for the mirror.
The philosophy how tropical geometry can be exploited is illustrated in the following triangle:

\[\scalebox{0.90}{
\begin{tikzpicture}[<->,>=stealth',shorten >=1pt,auto]
\coordinate (a) at (0,0);
\coordinate (b) at (-3.4,3.5);
\coordinate (c) at (3.4,3.5);
\node(1) at (a)  {\begin{tabular}{c}tropical\\ GW-invariants\end{tabular}};
\node(2) at (b) {\begin{tabular}{c}Gromov-Witten\\ invariants\end{tabular}};
\node(3) at (c)  {\begin{tabular}{c}(Feynman)\\ integrals\end{tabular}};
\path[every node/.style={}]
(1) edge node[left,sloped, anchor=center] {\begin{tabular}{c}Correspondence\\ Theorem\end{tabular}} (2)
(2) edge [dashed] node {Mirror symmetry} (3)
(3) edge node[right,sloped, anchor=center] {} (1);
%(3) edge node[right,sloped, anchor=center] {\begin{tabular}{c}Tropical\\ mirror symmetry\end{tabular}} (1);
\end{tikzpicture}
}
\]

In many situations, correspondence theorems relating Gromov-Witten invariants to their tropical counterparts are known \cite{Mi03, NS06, BBM10, CJM10}. If we can relate the generating function of tropical invariants to integrals, we obtain a proof of the desired mirror symmetry relation using a detour via tropical geometry \cite{Gro09, Ove15}.

The first and last author together with Bringmann and Buchholz studied the triangle above for the case of Hurwitz numbers of an elliptic curve, revealing that indeed a relation between counts of tropical covers and Feynman integrals holds, and that it even holds on a fine level, i.e.\ summand by summand \cite{BBBM13}. The tropical mirror symmetry theorem in particular implies Equation (\ref{eq-mirrorsymm}).

In \cite{BGM18}, we generalized the tropical mirror symmetry theorem to involve arbitrary descendant Gromov-Witten invariants. 

The main ingredient to prove tropical mirror symmetry is a bijection between certain covers of graphs and monomial contributing to a Feynman integral (see Theorem 2.23 \cite{BGM18}). The bijection we prove in \cite{BBBM13} was tailored to the case of Hurwitz numbers and does not apply to other situations. In \cite{BGM18}, we distilled the capacity of the bijective method, giving a theorem which holds for general types of graph covers which can essentially be viewed as combinatorial types of leaky tropical covers.

To relate generating series of curled pearl chains to Feynman integrals (thus proving Equation \ref{eq-mainresult}), we invoke this general bijection studied in \cite{BGM18}.

Again, it is easy to treat the case of tropical curve counts of leaky degree with the same methods. On the Feynman integral side, we then have to consider arbitrary coefficients of the formal power series mentioned above and not just the constant coefficient. The interpretation as path integral in complex analysis is restricted to the case of constant coefficients.

\subsection{Overview of the results}
Our methodology and results can be summed up by the following chart:

Equation (\ref{eq-mainresult}) as stated above is the equality of the very left side with the very right side. For the intermediate equalities, we chose to prove more general versions (involving tropical curve counts of leaky degree, curled pearl chains with leaking and Feynman integrals which are (non-constant) coefficients of power series), since these generalizations can be obtained essentially with the same effort and have potentially further applications in the theory of tropical curve counts.

\begin{figure}[H]
\centering
\def\svgwidth{395pt}
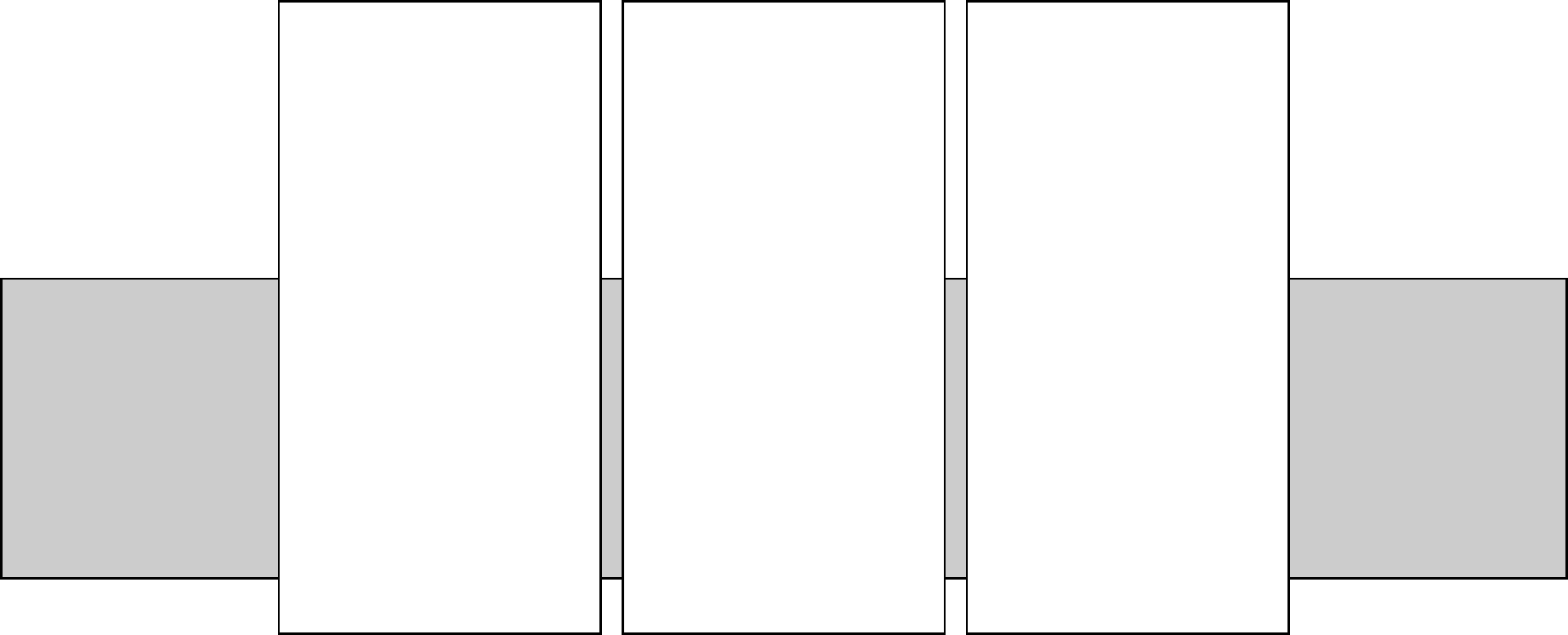
\label{fig-chart}
\end{figure}

\subsection{Further generalization and limitations}\label{subsec-generalization}

For our study of generating series of Gromov-Witten invariants of $E\times\PP^1$, we restrict to invariants evaluating point conditions. This puts us within the scope of the current techniques for correspondence theorems. It is also possible to evaluate points and insert Psi-conditions, i.e.\ study stationary descendant Gromov-Witten invariants. Preliminary work on correspondences exists for this case \cite{CJMR17, MR16}. It is more complicated to relate the generating series of descendant Gromov-Witten invariants to Feynman integrals however, as we can deduce from the experience in dimension one: In \cite{BGM18}, we study tropical mirror symmetry for descendant Gromov-Witten invariants of an elliptic curve (see also \cite{Lithesis, Li11}). The difficulty that arises can be expressed in terms of the multiplicity with which we need to count tropical covers: if we restrict to Hurwitz numbers, the multiplicity of a tropical cover is the product of edge expansion factors. In the case of descendant Gromov-Witten invariants, we also obtain local vertex contributions which are $1$-point relative descendant Gromov-Witten invariants. In dimension one, the generating series of $1$-point relative descendant Gromov-Witten invariants has a nice form and can be given in terms of sinus hyperbolicus \cite{OP}. This nice form enables us to single out vertex contributions in Feynman integrals and to prove a version of Equation (\ref{eq-mirrorsymm}) involving descendant Gromov-Witten invariants.

To count descendant Gromov-Witten invariants of $E\times\PP^1$ tropically, we also have local vertex contributions which are $1$-point relative descendant Gromov-Witten invariants, but now of $\PP^1\times\PP^1$. We are not aware of a nice form for the generating series of those. This momentarily limits our possibilities to generalize Equation (\ref{eq-mainresult}) to descendant Gromov-Witten invariants.

As we outline in Construction \ref{const-gluing}, our methods can also be viewed as a step towards future research involving counts of curves in $E\times\PP^1$ satisfying tangency conditions with the $\infty$-section.

\subsection{Organization of this paper}

Section \ref{sec-trop} is devoted to tropical curve counts and correspondence theorems. In Section \ref{sec-pearl}, we introduce pearl chains and curled pearl chains and prove the equality of counts of curled pearl chains to tropical curve counts of leaky degree. In Section \ref{sec-Feynman}, we prove the equality of our generating series to Feynman integrals.
Consequences concerning quasimodularity are discussed in Section \ref{sec-quasimod}.

\subsection{Acknowledgements}
We would like to thank Renzo Cavalieri, Georg Oberdieck, Dhruv Ranganathan and Kristin Shaw for helpful discussions. We thank an anonymous referee for pointing out a mistake in an earlier version.
The authors gratefully acknowledge support by DFG-collaborative research center TRR 195 (Project I.1 INST 248/237-1). Computations have been made with SINGULAR using the \textit{ellipticcovers} library. 
Part of this work was completed during the Mittag-Leffler programm \emph{Tropical geometry, amoebas and polytopes} in spring 2018. The authors would like to thank the institute for hospitality and excellent working conditions.

\section{Gromov-Witten invariants of $E\times \mathbb{P}^1$ and their tropical counterparts}\label{sec-trop}
\subsection{Gromov-Witten invariants of $E\times\PP^1$ and relative Gromov-Witten invariants of $\PP^1\times\PP^1$}
Gromov-Witten invariants are virtually enumerative intersection numbers on moduli spaces of stable maps. Let $E$ be an elliptic curve. We study Gromov-Witten invariants of $E\times \PP^1$. A  {\it stable map} of bidegree $(d_1,d_2)$ from a curve of genus $g$ to $E\times \PP^1$ with $n$ markings is a map $f: C \to E$, where  $C$ is a connected projective curve with at worst nodal singularities, and with $n$ distinct nonsingular marked points $x_1,\ldots,x_n\in C$, such that $f_\ast([C])$ is of class $(d_1,d_2)$ and $f$ has a finite group of automorphism. 
The moduli space of stable maps, denoted  $\overline{\mathcal{M}}_{g,n} (E\times\PP^1,(d_1,d_2))$, is a proper Deligne-Mumford stack of virtual dimension $2d_2+g-1+n$ \cite{Beh97,BF97}. 
The $i$th  evaluation morphism is the map $\ev_i: \overline{\mathcal{M}}_{g,n} (E\times \PP^1,(d_1,d_2)) \to E\times\PP^1$ that sends a point $[C, x_1, \ldots, x_n, f]$ to  $f(x_i) \in E\times \PP^1$. 

 \begin{definition}\label{def-GW}
 Fix $g,n,(d_1,d_2)$ with $n=2d_2+g-1$. The {\it Gromov-Witten invariant}  $\langle \tau_{0}(pt)^n  \rangle_{g,n}^{E\times\PP^1,(d_1,d_2)}$ is  defined as follows. As these numbers are the key players in this paper, we introduce the special notation $N_{(d_1,d_2,g)}$ as well:

 \begin{equation}
N_{(d_1,d_2,g)}= \langle \tau_{0}(pt)^n  \rangle_{g,n}^{E\times\PP^1,(d_1,d_2)} = \int_{[\overline{\mathcal{M}}_{g,n} (E\times\PP^1,(d_1,d_2))]^{vir}} \prod_{i=1}^n \ev_i^\ast(pt) 
 \end{equation}
 where $pt$ denotes the class of a point in $E\times \PP^1$.
 \end{definition}

To relate Gromov-Witten invariants of $E\times \PP^1$ to their tropical counterparts, we use a degeneration argument relating them to {\it relative Gromov-Witten invariants} of $\PP^1\times \PP^1$, relative to the $0$- and $\infty$-section.

Let $\mu^+$, $\phi^+$, $\mu^-$ and $\phi^-$ be partitions such that the sum $d_1$ of the parts in $\mu^+$ and $\phi^+$ equals the sum of the parts in $\mu^-$ and $\phi^-$. Let $n_1=\ell(\phi^+)+\ell(\phi^-)$ and $n_2=\ell(\mu^+)+\ell(\mu^-)$.
Consider the moduli space of {\it relative stable maps to $\PP^1\times\PP^1$} $$\overline{\mathcal{M}}_{g,n} (\PP^1\times\PP^1, (\mu^+,\phi^+),(\mu^-,\phi^-),(d_1,d_2)),$$ where part of the data specified are the partitions of contact orders $(\mu^+,\phi^+)$ resp.\ $(\mu^-,\phi^-)$ which we fix over the $0$- resp.\ $\infty$-section in $\PP^1\times\PP^1$. The points of contact with the $0$- and $\infty$-section are marked. 
We want to fix the points with contact orders given by $\phi^+$ and $\phi^-$, the ones with contact orders given by $\mu^+$ and $\mu^-$ are allowed to move.
A detailed discussion of spaces of relative stable maps and their boundary can be found e.g.\ in \cite{Vak08}.
This moduli space is a Deligne-Mumford stack of virtual dimension $(g-1) +2d_2 +n+n_1+n_2$. For $i=1,\ldots,n$, the $i$th  evaluation morphism is the map $\ev_i: \overline{\mathcal{M}}_{g,n} (\PP^1\times\PP^1, (\mu^+,\phi^+),(\mu^-,\phi^-),(d_1,d_2)) \to \PP^1\times\PP^1$ that sends a point $[C, x_1, \ldots, x_n, f]$ to  $f(x_i) \in \PP^1\times \PP^1$. 
The points marking the contact points with the $0$- and $\infty$-section give rise to evaluation morphisms 
\[
\widehat{\ev_i}: \overline{\mathcal{M}}_{g,n} (\PP^1\times\PP^1, (\mu^+,\phi^+),(\mu^-,\phi^-),(d_1,d_2))\to \PP^1.
\] 
Here, the target $\PP^1$ is the $0$-section for $\phi^-$ and $\mu^-$, and the $\infty$-section for $\phi^+$ and $\mu^+$.

\begin{definition}\label{def-relGWI}
The \emph{relative Gromov--Witten} invariant is defined as the following intersection number on $\overline{\mathcal{M}}_{g,n} (\PP^1\times\PP^1, (\mu^+,\phi^+),(\mu^-,\phi^-),(d_1,d_2))$:
\begin{equation}\label{eq-lgw}
\langle ({{\phi}}^-,{{\mu}}^-)| \tau_{0}(pt)^n|({{\phi}}^+,{{\mu}}^+)\rangle_{g,n}^{\PP^1\times\PP^1,(d_1,d_2)}= \int \prod_{j=1}^n \ev_j^\ast([pt])\prod_{i=n+1}^{n+n_1}\widehat{\ev}_i^\ast([pt])
\end{equation}
\end{definition}

One can allow source curves to be disconnected, and introduce {\it disconnected Gromov-Witten invariants}. We will add the superscript $\bullet$ anytime we refer to the disconnected theory.

The following statement is a consequence of the degeneration formula \cite{Li01A,Li02}, see also Theorem 4.7 in \cite{CJMR16}:
\begin{proposition}\label{prop-degeneration}
A Gromov-Witten invariant of $E\times\PP^1$ equals a weighted sum of relative Gromov-Witten invariants of $\PP^1\times\PP^1$:
$$N_{(d_1,d_2,g)}^\bullet= \sum_{(\mu,\phi)\; \vdash d_1} \frac{\prod_i \mu_i \prod_j\phi_j}{|\Aut(\mu)||\Aut(\phi)|}
\langle (\mu,\phi)|\tau_{0}(pt)^n |(\phi,\mu) \rangle_{g-\ell(\mu)-\ell(\phi), n}^{\PP^1\times\PP^1,(d_1,d_2),\bullet}.
$$
Here, the sum goes over all tuples of partitions which together form a partition $(\mu,\phi)$ of $d_1$.
\end{proposition}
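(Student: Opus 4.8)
The plan is to realize this identity as an instance of Jun Li's degeneration formula \cite{Li01A, Li02}, applied to the degeneration of $E\times\PP^1$ induced by degenerating the elliptic curve factor to a nodal rational curve.

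First I would set up the degeneration. Choose a one-parameter family $\mathcal{E}\to\Delta$ over a disk whose generic fiber is the smooth elliptic curve $E$ and whose central fiber $E_0$ is a rational curve with a single node (a Tate-type degeneration sending the $j$-invariant to infinity), arranged so that the total space is smooth. Taking the product with $\PP^1$ gives a family $\mathcal{E}\times\PP^1\to\Delta$ with generic fiber $E\times\PP^1$ and central fiber $E_0\times\PP^1$, which is singular along the double curve $\{\mathrm{node}\}\times\PP^1$. Its normalization is $\PP^1\times\PP^1$, where the first factor is the normalization $\PP^1\to E_0$: the two preimages of the node yield two disjoint sections $D_0=\{0\}\times\PP^1$ and $D_\infty=\{\infty\}\times\PP^1$, which are exactly the $0$- and $\infty$-sections and which get identified to recreate the node of $E_0$. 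Under normalization, a curve in $E_0\times\PP^1$ of bidegree $(d_1,d_2)$ corresponds to a curve in $\PP^1\times\PP^1$ of bidegree $(d_1,d_2)$ meeting $D_0\cup D_\infty$ in the glued points, so that $d_1$ is the degree in the degenerating $E$-direction.

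Next I would apply the degeneration formula in its self-gluing form. Since the normalized central fiber has a single irreducible component, the formula expresses $N^\bullet_{(d_1,d_2,g)}$ as a sum of relative (disconnected) Gromov-Witten invariants of the single piece $(\PP^1\times\PP^1, D_0\cup D_\infty)$, summed over all admissible gluing data. The gluing data is a profile of contact orders along $D_0$ matching the one along $D_\infty$, so that contact points over $D_0$ may be glued to those over $D_\infty$; as the contact orders record intersection with the $E$-direction, their combined multiplicities assemble into a partition of $d_1$. Splitting this partition into the parts $\phi$ where we impose a fixed-point condition (via $\widehat{\ev}^\ast([pt])$) and the parts $\mu$ that are allowed to move reproduces exactly the data $(\mu,\phi)\vdash d_1$ of the statement, with orders $(\mu,\phi)$ over $D_0$ and $(\phi,\mu)$ over $D_\infty$. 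The $n=2d_2+g-1$ interior conditions $\tau_0(pt)^n$ all descend to $\PP^1\times\PP^1$, yielding the factor $\prod_j\ev_j^\ast([pt])$ of Definition \ref{def-relGWI}.

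Finally I would read off the weights and the genus. Each of the $\ell(\mu)+\ell(\phi)$ nodes created by the self-gluing contributes its contact order as a multiplicity, producing the numerator $\prod_i\mu_i\prod_j\phi_j$, while the indistinguishability of contact points of equal order yields the denominator $|\Aut(\mu)||\Aut(\phi)|$. Self-gluing at $\ell(\mu)+\ell(\phi)$ points lowers the arithmetic genus by $\ell(\mu)+\ell(\phi)$, so the relative invariants carry genus $g-\ell(\mu)-\ell(\phi)$, and the disconnected theory (superscript $\bullet$) is forced because cutting the node may disconnect the domain. The hard part is the bookkeeping inside the self-gluing version of the formula: checking that the standard gluing multiplicities and automorphism factors specialize to precisely $\frac{\prod_i\mu_i\prod_j\phi_j}{|\Aut(\mu)||\Aut(\phi)|}$ and that the genus shift is as claimed. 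Since the statement is cited as a direct consequence of \cite{Li01A, Li02} and Theorem 4.7 of \cite{CJMR16}, the essential content is identifying the correct degeneration and matching its output with Definition \ref{def-relGWI}, rather than reproving the degeneration formula itself.
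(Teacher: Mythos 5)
Your proposal is correct and follows essentially the same route the paper takes: the paper states Proposition \ref{prop-degeneration} as a direct consequence of the degeneration formula of \cite{Li01A,Li02} (cf.\ Theorem 4.7 of \cite{CJMR16}) applied to the degeneration of $E$ to a nodal rational curve, whose normalization produces $\PP^1\times\PP^1$ relative to the $0$- and $\infty$-sections. Your accounting of the diagonal splitting into fixed ($\phi$) and moving ($\mu$) contact points, the gluing multiplicities $\prod_i\mu_i\prod_j\phi_j/(|\Aut(\mu)||\Aut(\phi)|)$, the genus shift by $\ell(\mu)+\ell(\phi)$, and the necessity of the disconnected theory all match the intended argument.
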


\subsection{Tropical curves}
An \emph{abstract tropical} \emph{curve} is a  metric graph $\Gamma$ with unbounded edges called \emph{ends} which have infinite length. We only consider explicit tropical curves here. Locally around a point $p$, $\Gamma$ is homeomorphic to a star with $r$ halfrays. 
The number $r$ is called the \emph{valence} of the point $p$ and denoted by $\val(p)$. We identify the vertex set of $\Gamma$ as the points of valence different from $2$. 
Edges adjacent to leaf vertices (i.e.\ vertices of valence $1$) are required to have infinite length and are also called \emph{ends}.
Vertices of valence greater than $1$ are called  \textit{inner vertices}. Besides \emph{edges}, we introduce the notion of \emph{flags} of $\Gamma$. A flag is a pair $(V,e)$ of a vertex $V$ and an edge $e$ incident to it ($V\in \partial e$). Edges that are not ends are required to have finite length and are referred to as \emph{bounded} or \textit{internal} edges.
A \emph{marked tropical curve} is a tropical curve whose ends are (partially) labeled. An isomorphism of a marked tropical curve is an isometry respecting the end markings. The \emph{genus} of $\Gamma$ is given by its first Betti number.
The \emph{combinatorial type} is the equivalence class of tropical curves obtained by identifying any two tropical curves which differ only by edge lengths.

\subsection{Tropical $E\times \mathbb{P}^1$}

We denote the tropical numbers $\RR\cup \{- \infty\}$ by $\TT$.
The tropical projective line, $\PP^1_{\TT}$, equals $\RR\cup \{\pm \infty\}$. As in algebraic geometry, it is glued from two copies of the affine line $\TT$ using the tropicalization of the identification map on $\RR$: $x\mapsto -x$.

A (nondegenerate) tropical elliptic curve $E_{\TT}$ is a circle with a fixed length. 
The tropical analogue of the surface we are interested in here, $E_\TT\times\PP^1_\TT$ can be viewed as an infinite cylinder. It is a tropical surface in the sense of Definition 3.1 \cite{Shaw15}.

\begin{figure}[H]
\centering
\def\svgwidth{150pt}
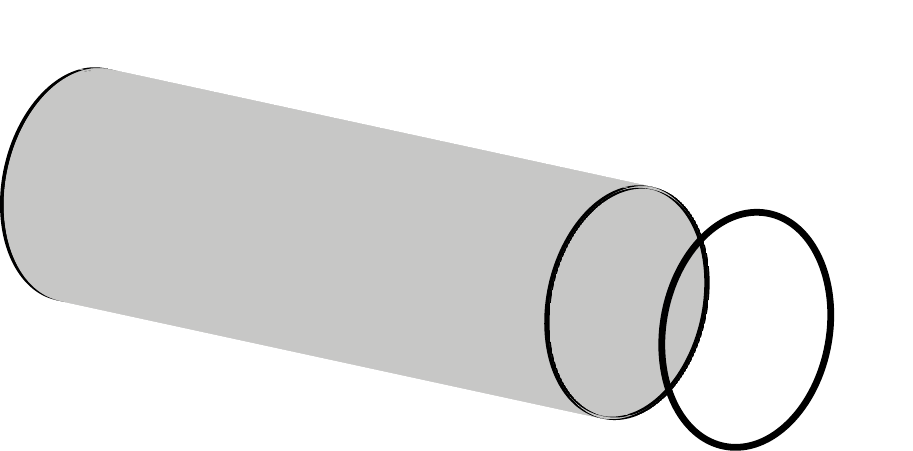
\caption{An illustration of $E_\TT\times\PP^1_\TT$.}
\label{fig-Pic2_ink}
\end{figure}

\subsection{Tropical stable maps}
A nice feature of tropical geometry is that often, we do not have to use compactifications to get sufficient geometric information. This holds true for our counts of curves. Therefore, we consider our tropical versions of stable maps as maps to $\RR^2$. The compactification is implicit in the choice of directions of the ends, resp.\ in the way they glue.

\begin{definition}\label{def-tropstablemap}
A \emph{tropical stable map} to $\RR^2$ is a tuple $(\Gamma,f)$ where $\Gamma$ is a (not necessarily connected) abstract tropical curve 
with $n$ marked ends denoted $x_1,\ldots,x_n$ and $f:\Gamma\rightarrow \RR^2$ is a map satisfying:
\begin{enumerate}
\item \emph{Integer affine on each edge:} On each edge $e$ of $\Gamma$, $f$ is of the form $$t\mapsto a+t\cdot v \mbox{ with } v\in \ZZ^2,$$ where we parametrize $e$ as an interval of size the length $l(e)$ of $e$. The vector $v$, called the \emph{direction}, arising in this equation is defined up to sign, depending on the starting vertex of the parametrization of the edge. We will sometimes speak of the direction of a flag $v(V,e)$. If $e$ is an end we use the notation $v(e)$ for the direction of its unique flag.
\item \emph{Balancing condition:} At every vertex, we have 
$$\sum_{e \in \partial V} v(V,e)=0.$$
\end{enumerate}
\end{definition}
 For an edge with direction $v=(v_1,v_2) \in \ZZ^2$, we call $w=\gcd(v_1,v_2)$ the \emph{expansion factor} and $\frac{1}{w}\cdot v$ the \emph{primitive direction} of $e$.

An isomorphism of tropical stable maps is an isomorphism of the underlying tropical curves respecting the map. 
The \emph{combinatorial type} of a tropical stable map is the data obtained when dropping the metric of the underlying graph. More explicitly, it consists of the data of a finite graph $\Gamma$, and for each edge $e$ of $\Gamma$, the direction of $e$.

\begin{definition}\label{def-tropcutmap}
A (cut, open) \emph{tropical stable map to $E_\TT\times\PP^1_\TT$} of \textit{leaky degree} $\Delta=\lbrace L_1,\dots,L_{d_2}\rbrace$ is a tropical stable map to $\RR^2$, with $N>n$ marked ends satisfying:
\begin{enumerate}
\item $\Delta$ is a multiset containing elements of $\ZZ$ such that $\sum_{i=1}^{d_2} L_i=0$.
\item  \emph{End directions:} The directions of the ends are given as follows:
\begin{itemize}
\item The marked ends $x_i$, $i=1,\ldots,n$, are contracted: $v(x_i)=0$.
\item There are $d_2$ ends of direction $(0,-1)$. All these ends are unmarked.
\item There are $d_2$ (unmarked) ends of direction $(L_i,1)$.  
\item The remaining ends are marked and of primitive direction $(\pm 1,0)$. 
\end{itemize}
%The sum of the expansion factors of the ends of primitive direction $(1,0)$ equals $d_1$.
\item \emph{Gluing:} The ends of primitive direction $(\pm 1,0)$ come in pairs, one of direction $(1,0)$ and one of $(-1,0)$, with the same expansion factor and the same $y$-coordinate.
 \end{enumerate}
When $L_i=0$ for all $i=1,\dots,d_2$, we refer to these tropical stable maps as maps without \textit{leaking}.
\end{definition}

\begin{construction}\label{const-gluing}
In two steps, we can produce a tropical stable map to $E_\TT\times\PP^1_\TT$ from a cut, open tropical stable map:
\begin{enumerate}
\item \label{const-gluing1}
We glue the pairs of ends in (2) of \ref{def-tropcutmap} and partially compactify by adding leaves at infinity for the ends of direction $(0,-1)$. The markings for the ends which we glued are forgotten.
The graph we obtain in this way is denoted $\Gamma'$. The map $f$ extends to $\Gamma'$ and the image $f(\Gamma')$ is an open tropical curve in $E_\TT\times\PP^1_\TT$. 
\item  \label{const-gluing2} To produce a tropical curve in $E_\TT\times\PP^1_\TT$, we need to mark the ends of directions $(L_i,1)$ and fix the following additional data:

\begin{enumerate}
\item A partition of $\Delta$ into subsets $\Delta_i=\lbrace L_{i_1},\dots,L_{i_{k_i}}\rbrace$ satisfying $\sum L_{i_j}=0$ for all $i$.
\item \label{tree} For each subset $\Delta_i$ a $3$-valent tree $T_i$ that satisfies the following.
	\begin{enumerate}
	\item
	The tree $T_i$ has $k_i+1$ leaves, where each $L_{i_j}$ for $j=1,\dots,k_i$ appears as a label of a leaf and exactly one leaf, called the \textit{root vertex}, is unlabeled.
	\item
	The tree $T_i$ is balanced in the following sense:
		Equip $T_i$ with the orientation that is induced by the root vertex such that the edge adjacent to the root vertex points towards the root vertex (notice that every non-leaf vertex of $T_i$ has precisely $2$ incoming edges and $1$ outgoing edge).
		Equip each edge adjacent to a leaf labeled by $L_{i_j}$ with the weight $L_{i_j}\in\mathbb{Z}$.
		For every non-leaf vertex $V$ of $T_i$, define the adjacent edges' weights by \textit{balancing}, i.e. the outgoing edge's weight of $V$ is the sum of the two incoming edges' weights. We do not allow a vertex $V$ of $T_i$ to have two incoming edges of the same weight .
	\end{enumerate}
\item
Each non-leaf vertex $V$ of $T_i$ is decorated with a number $n_V \in \mathbb{N}_{>0}$.
\end{enumerate}

With this additional data, we can produce a (compact) tropical curve in $E_\TT\times\PP^1_\TT$ from the open one: 
If two leaves $L_1$ and $L_2$ are adjacent to a vertex $V$ in $T_i$ (in particular, $L_1\neq L_2$ by condition (\ref{tree})), the two open ends of direction $(L_1,1)$ and $(L_2,1)$ meet in $E_\TT\times\PP^1_\TT$. At their $n_V$-th meeting point, we let them merge to a $3$-valent vertex and start a new open end at that vertex whose direction is determined by the balancing condition. We cut the cherry corresponding to $L_1$ and $L_2$ from the tree $T_i$ and continue recursively with the new tree.
Finally, we end up with the edge adjacent to the root vertex that produces an end of vertical direction $(0,k_i)$, which we compactify by adding vertices at infinity.

In this way, we produce a tropical curve in $E_\TT\times\PP^1_\TT$ whose upper vertical ends may have non-trivial expansion factors.

\end{enumerate}

Here, we focus on the open tropical curves we obtain using step (\ref{const-gluing1}) above. In particular, we will provide methods to count such open tropical curves and study their generating functions. We believe that our methods can be used in future research focusing on counts of curves in $E_\TT\times\PP^1_\TT$ with ends of non-trivial expansion factors, i.e.\ tropicalizations of curves in $E\times\PP^1$ satisfying tangency conditions with the $\infty$-section.

Note that for curves without leaking, step (\ref{const-gluing2}) of Construction \ref{const-gluing} is trivial: we only compactify by adding vertices at infinity for the vertical ends of direction $(0,1)$. We also neglect markings for these upper vertical ends. These curves, providing counts of curves satisfying point conditions in $E\times \mathbb{P}^1$, play the main role in our study.
\end{construction}

\begin{figure}[H]
\centering
\def\svgwidth{380pt}
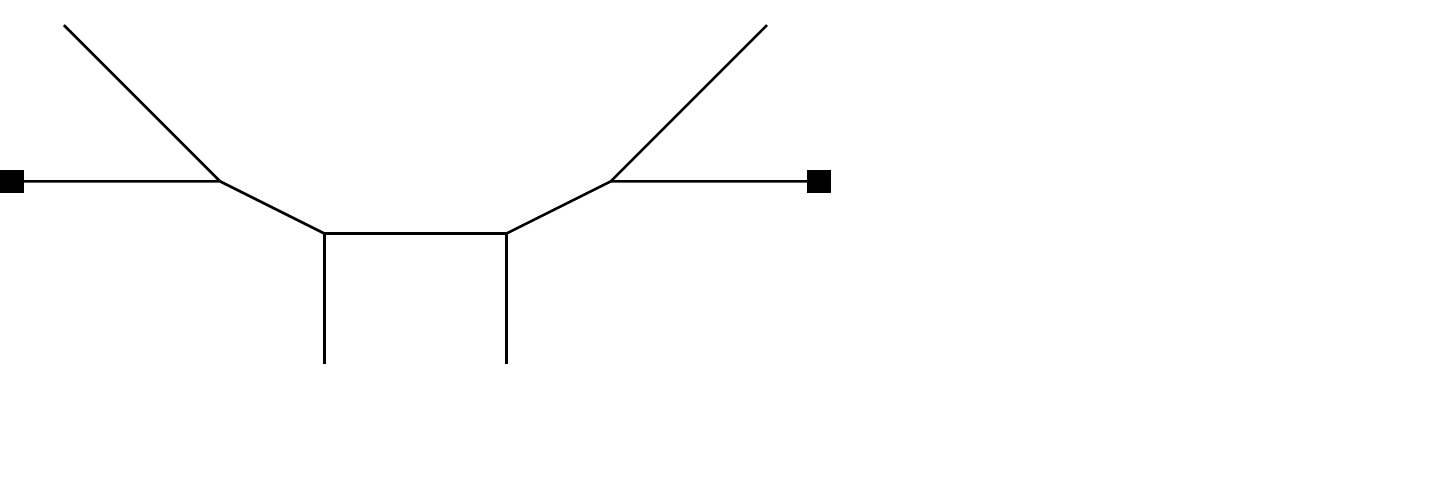
\caption{From left to right: The cut tropical stable map $f(\Gamma)$ to $E_\TT\times\PP^1_\TT$ of bidegree $(d_1,d_2)=(1,2)$ and genus $1$ and a non-cut picture, where we used Construction \ref{const-gluing} with two trivial trees $T_1,T_2$ consisting of two leaves each, and glued the ends with the black squares. The leaky degree is $\{-1,1\}$.}
\label{fig-glued1}
\end{figure}

\begin{figure}[H]
\centering
\def\svgwidth{280pt}
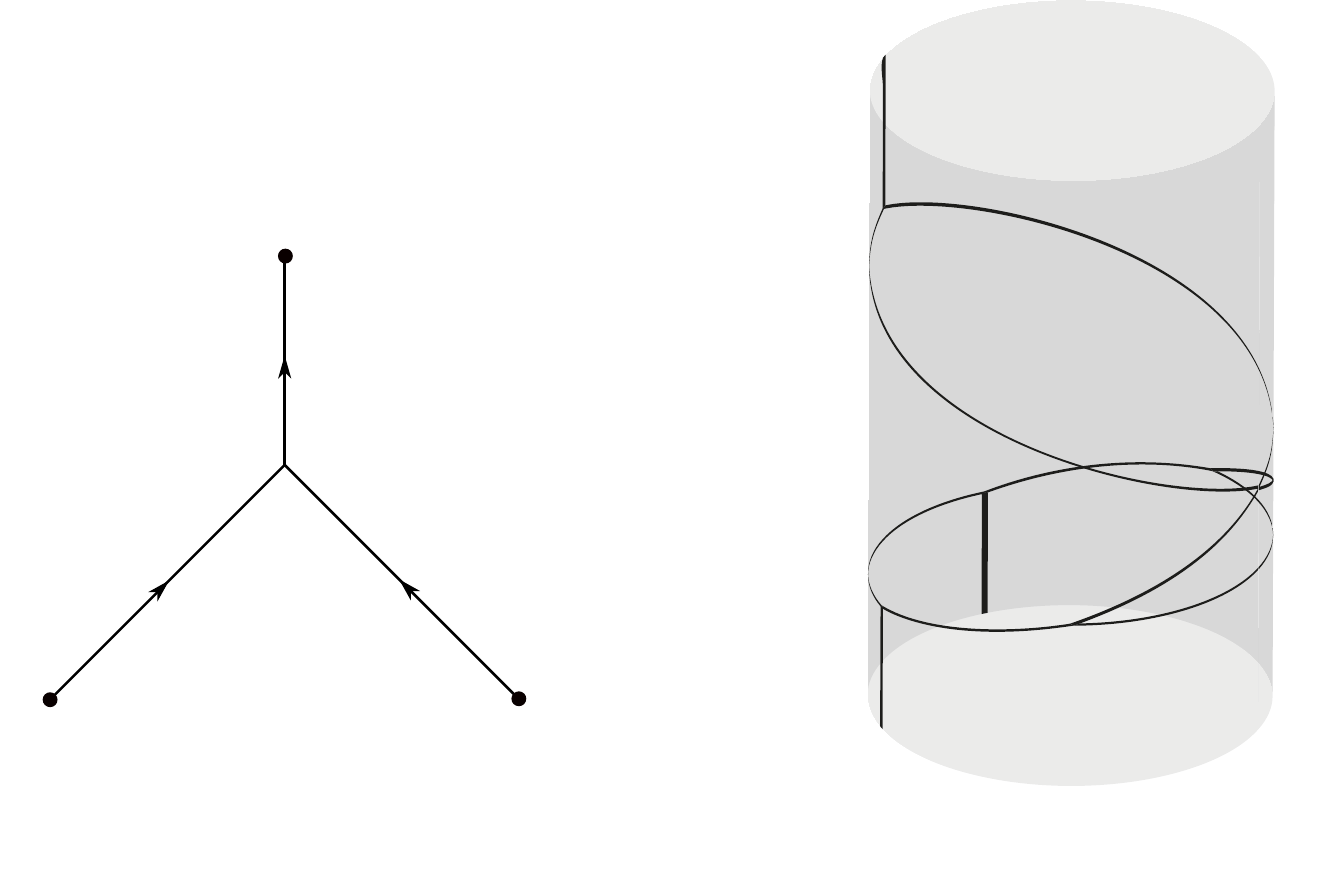
\caption{On the left, there is an example of a tree $T_1$ as in Construction \ref{const-gluing} with its orientation and labels. On the right, we see a glued picture that is produced by Construction \ref{const-gluing} by taking the left curve in Figure \ref{fig-glued1}, the tree $T_1$ and $n_V=2$. The two arrows mark the intersections which we need to take into account for $n_V=2$.}
\label{fig-glued2}
\end{figure}

Two tropical stable maps are \emph{equivalent}, if they differ only in the markings for the ends of primitive direction $(\pm 1,0)$, i.e.\ the glued graph and map to $E_\TT\times\PP^1_\TT$ from Construction \ref{const-gluing}(\ref{const-gluing1}) coincide. By abuse of notation, we will consider tropical stable maps only up to equivalence.

Depending on the image of their end vertices, an end of direction $(L_1,1)$ and an end of direction $(L_2,1)$ of a cut tropical stable map can intersect. In the dual subdivision for $f(\Gamma)$, such an intersection corresponds to a parallelogram. Consider the dual subdivision without these parallelograms, and let $d_1$ be the minimal distance of its vertices to its base line. Let $d_2$ be the number of ends of direction $(0,-1)$ which equals the number of ends of direction $(L_i,1)$ for all $i$. We call $(d_1,d_2)$ the \emph{bidegree} (see Figure \ref{fig-glued1}) of the tropical stable map $(\Gamma, f)$ for leaky degree $\Delta$ to $E_\TT\times\PP^1_\TT$.

We define the \emph{genus} of a tropical stable map $(\Gamma,f)$ to $E_\TT\times\PP^1_\TT$ to be the genus of the graph $\Gamma'$ obtained by gluing pairs of ends of $\Gamma$ as in Construction \ref{const-gluing}(\ref{const-gluing1}). We say that a tropical stable map to $E_\TT\times\PP^1_\TT$ is \emph{connected} if $\Gamma'$ is connected.
If $\nu\vdash d_1$ is the partition of expansion factors of the ends of $\Gamma$ of direction $(1,0)$ (equivalently, $(-1,0)$) and the genus of $(\Gamma,f)$ is $g$, then $\Gamma$ has genus $g-\ell(\nu)$.

For points $p_1,\ldots,p_n\in \RR^2$, a tropical stable map $(\Gamma,f)$ to
$E_\TT\times\PP^1_\TT$ \emph{satisfies the point conditions} $(p_1,\ldots,p_{n})$ if the point $f(x_i)$ to which the end marked $x_i$ is contracted equals $p_i$.

Notice that a tropical stable map to $E_\TT\times\PP^1_\TT$ satisfying point conditions can be viewed as a tropical curve to the plane satisfying end and point conditions, where the end conditions are imposed by gluing (see Definition \ref{def-troprelstablemap}). In particular, the theory of counts of plane tropical curves applies (see \cite{Mi03, GM051, GM052, GM053}).
The end conditions that are imposed by gluing are not necessarily in general position, for example there can be an edge of the glued graph $\Gamma'$ in Construction \ref{const-gluing}(\ref{const-gluing1}) that is ''curled'' several times. In $\Gamma$, this edge is cut into several connected components which are all mapped to the same horizontal line. If we shift the end conditions slightly, we have a tropical stable map to $\RR^2$ satisfying general conditions, and thus it is $3$-valent and, away from the contracted marked ends, locally an embedding.

\begin{example}\label{ex-dual}
Let $p_1,\dots,p_5\in\RR^2$ be some general positioned points. Figure \ref{fig-Pic5_ink} shows a tropical stable map to $E_\TT\times\PP^1_\TT$ of leaky degree $\Delta=\lbrace -1,1 \rbrace$ with a curled edge such that the point conditions are satisfied. Note that we need to fix pairs of ends with the same $y$-coordinate to make the gluing unique.

\begin{figure}[H]
\centering
\def\svgwidth{250pt}
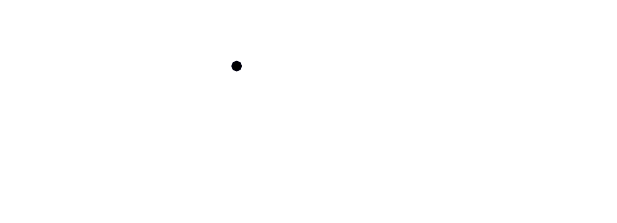
\caption{A (cut) tropical stable map satisfying point conditions. Note that one edge is curled once. The different pairs of end markings tell us how to glue, but note that we shifted the $y$-coordinates of glued ends a bit in order to get a better picture (in fact their $y$-coordinates are the same).}
\label{fig-Pic5_ink}
\end{figure}

\end{example}

For $n:=2d_2+g-1$ points $p_1,\ldots,p_{n}\in\RR^2$ in tropical general position, there are finitely many tropical stable maps $(\Gamma,f)$ to $E_\TT\times\PP^1_\TT$ of bidegree $(d_1,d_2)$ and genus $g$ satisfying the point conditions. Furthermore, each $\Gamma$ is $3$-valent, and $f$ is an embedding locally around the vertices which are not adjacent to contracted ends. In particular, we can define the multiplicity of a tropical stable map as in the original count of plane curves \cite{Mi03}:

\begin{definition}\label{def-mult} Let $(\Gamma,f)$ be a tropical stable map to $E_\TT\times\PP^1_\TT$, such that $\Gamma$ is $3$-valent, and $f$ is an embedding locally around the vertices which are not adjacent to marked ends. 
For a vertex $V$ of $\Gamma$ which is not adjacent to a marked contracted end, we define its \emph{vertex multiplicity} $\mult_V(\Gamma,f)$ to be $|\det(v_1,v_2)|$, where $v_1$ and $v_2$ denote the direction vectors of two of its adjacent edges.

We define the \emph{multiplicity} of $(\Gamma,f)$ to be the product of its vertex multiplicities:
$$\mult(\Gamma,f)=\prod_V \mult_V(\Gamma,f),$$ where the product goes over all vertices not adjacent to marked ends.
\end{definition}

\begin{definition}\label{def-tropGW}
Fix positive integers $d_1$, $d_2$ and $g$.
Fix $n=2d_2+g-1$ points $p_1,\ldots,p_n\in \RR^2$ in tropical general position.
Let $N^{\trop}_{(\Delta,d_1,d_2,g)}$ be the \emph{number of connected tropical stable maps to $E_\TT\times\PP^1_\TT$} of leaky degree $\Delta$, bidegree $(d_1,d_2)$ and genus $g$ satisfying the point conditions $p_1,\ldots,p_n$, counted with multiplicity as in Definition \ref{def-mult}. If we only list $3$ subscripts, this refers to the case without leaking: $N^{\trop}_{(d_1,d_2,g)}= N^{\trop}_{(\lbrace 0,\dots,0\rbrace,d_1,d_2,g)}$.
\end{definition}
As usual, the analogous count of not necessarily connected tropical stable maps is denoted by $N^{\trop,\bullet}_{(\Delta,d_1,d_2,g)}$ resp.\ $N^{\trop,\bullet}_{(d_1,d_2,g)}$.

\begin{definition}\label{def-troprelstablemap}
Fix partitions $\mu^+$, $\phi^+$, $\mu^-$ and $\phi^-$ such that the sum $d_1$ of the parts in $\mu^+$ and $\phi^+$ equals the sum of the parts in $\mu^-$ and $\phi^-$. Let $n_1=\ell(\phi^+)+\ell(\phi^-)$ and $n_2=\ell(\mu^+)+\ell(\mu^-)$. Fix  $n\in\NN_{>0}$ and a leaky degree $\Delta=\lbrace L_1,\dots,L_{d_2}\rbrace$ which is a multiset containing elements of $\ZZ$ such that $\sum_{i=1}^{d_2}L_i=0$.

A \emph{relative tropical stable map} matching the discrete data above is a (not necessarily connected) tropical stable map to $\RR^2$ of leaky degree $\Delta$, with $n+n_1+n_2$ marked ends satisfying:
\begin{enumerate}
\item The direction of the ends are imposed as follows:
\begin{itemize}
\item The marked ends $x_i$ for $i=1,\ldots,n$ are contracted: $v(x_i)=0$.
\item The other marked ends are of primitive direction $(\pm 1,0)$.
\item There are $d_2$ ends of direction $(0,-1)$.
\item There is an end of direction $(L_i,1)$ for each $i=1,\dots,d_2$.
\item The partition of expansion factors of the marked ends with primitive direction $(-1,0)$ is $(\mu^+,\phi^+)$.
\item The partition of expansion factors of the marked ends with primitive direction $(1,0)$ is $(\mu^-,\phi^-)$. 
\end{itemize}
 \end{enumerate}
\end{definition}
The \emph{genus} of a relative tropical stable map $(\Gamma,f)$ is defined to be the genus of $\Gamma$. We say $(\Gamma,f)$ is \emph{connected} if $\Gamma$ is.

For relative tropical stable maps, we can impose \emph{end conditions} by requiring that the $y$-coordinate of the horizontal line to which $f(x_i)$ is mapped equals a fixed value $y_i$ for the end marked $i$. By convention, we fix $y$-coordinates for the ends corresponding to $\phi^+$ and $\phi^-$.
For points and end conditions in general position, every relative tropical stable map satisfying the conditions has a $3$-valent source graph and is locally an embedding. We can define its multiplicity similar to Definition \ref{def-mult}:

\begin{definition}\label{def-multrel}
Let $\mu^+$, $\phi^+$, $\mu^-$ and $\phi^-$, $\Delta$, $d_1$, $d_2$, $n$, $n_1$ and $n_2$ be as in Definition \ref{def-troprelstablemap}.
Fix $n$ points in general position and $n_1$ $y$-coordinates for ends.
For a relative tropical stable map $(\Gamma,f)$ satisfying the points- and end conditions, we define its multiplicity as follows:
$$\mult(\Gamma,f)= \frac{1}{\prod \phi^+_i \prod_j \phi^-_j} \prod_V\mult_V(\Gamma,f),$$
where the product goes over all vertices $V$ not adjacent to contracted ends and $\mult_V(\Gamma,f)$ is defined in \ref{def-mult}.
\end{definition}

\begin{definition}\label{def-troprel} Fix $\mu^+$, $\phi^+$, $\mu^-$ and $\phi^-$, $\Delta$, $d_1$, $d_2$, $n$, $n_1$ and $n_2$ as in Definition \ref{def-troprelstablemap}, and a genus $g$, where we impose $ n=n_2+2d_2+g-1.  $
Fix $n$ points in general position and $n_1$ $y$-coordinates for ends.

For the invariant 
$\langle ({{\phi}}^-,{{\mu}}^-)| \tau_{0}(pt)^n|({{\phi}}^+,{{\mu}}^+)\rangle_{g,n}^{\trop,\Delta,(d_1,d_2)}$, we count connected \emph{relative tropical stable maps of genus $g$, matching the data and satisfying the conditions} with their multiplicity as defined in \ref{def-multrel}.
The invariant $\langle ({{\phi}}^-,{{\mu}}^-)| \tau_{0}(pt)^n|({{\phi}}^+,{{\mu}}^+)\rangle_{g,n}^{\trop,\Delta,(d_1,d_2),\bullet}$ denotes the analogous count of not necessarily connected relative tropical stable maps.
\end{definition}

\begin{proposition}\label{prop-tropdeg}
The number $N^{\trop,\bullet}_{(\Delta,d_1,d_2,g)}$ of tropical stable maps to $E_\TT\times\PP^1_\TT$ of leaky degree $\Delta$, bidegree $(d_1,d_2)$ and genus $g$ satisfying generic point conditions equals a sum of counts of relative tropical stable maps:

$$N^{\trop,\bullet}_{(\Delta,d_1,d_2,g)}= \sum_{(\mu,\phi) \;\vdash d_1} \frac{\prod_i \mu_i \prod_j\phi_j}{|\Aut(\mu)||\Aut(\phi)|}
\langle (\mu,\phi)|\tau_{0}(pt)^n |(\phi,\mu) \rangle_{g-\ell(\mu)-\ell(\phi), n}^{\trop,\Delta,(d_1,d_2),\bullet}.
$$
Here, the sum goes over all pairs of partitions $(\mu,\phi)$ of $d_1$.
\end{proposition}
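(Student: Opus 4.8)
The plan is to prove this tropical degeneration formula by a ``cut-and-reglue'' argument that mirrors the algebraic degeneration formula of Proposition \ref{prop-degeneration}, but carried out directly on the combinatorial level of tropical stable maps. The key geometric idea is that the gluing condition (3) in Definition \ref{def-tropcutmap} identifies pairs of horizontal ends of direction $(\pm 1,0)$ with matching $y$-coordinate and expansion factor; cutting along these gluings turns a (connected or disconnected) tropical stable map to $E_\TT\times\PP^1_\TT$ into a relative tropical stable map to $\RR^2$ in the sense of Definition \ref{def-troprelstablemap}. Concretely, I would place the $n=2d_2+g-1$ point conditions in general position so that every contributing map is $3$-valent and locally an embedding away from contracted ends, and then cut each horizontal edge of $\Gamma'$ that wraps around $E_\TT$. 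Each such edge, when cut, produces one end of primitive direction $(1,0)$ and one of $(-1,0)$ with a common expansion factor; the multiset of these expansion factors is a partition $(\mu,\phi)\vdash d_1$, where the convention of Definition \ref{def-troprelstablemap} assigns fixed $y$-coordinates to the $\phi$-ends and lets the $\mu$-ends move.

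First I would set up the bijection between (i) tropical stable maps to $E_\TT\times\PP^1_\TT$ of the given discrete data together with a choice of cutting, and (ii) relative tropical stable maps matching the partitions $(\mu,\phi)$. Going from a relative map back to an $E_\TT\times\PP^1_\TT$-map is exactly Construction \ref{const-gluing}(\ref{const-gluing1}): one glues each $(1,0)$-end to a $(-1,0)$-end of the same expansion factor along a matching $y$-coordinate. The combinatorial factor then arises from counting how many gluings produce the same glued map. For a pair of ends with expansion factor $w$, the gluing can be realized in $w$ ways (reflecting the $w$ lattice points of length giving the same wrapped edge), which is the source of the factor $\prod_i\mu_i\prod_j\phi_j$ in the numerator; the factors $|\Aut(\mu)|,|\Aut(\phi)|$ in the denominator correct for the overcounting coming from relabelings of ends with equal expansion factor. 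The genus bookkeeping follows the statement already recorded in the excerpt: if $\nu=(\mu,\phi)$ is the partition of expansion factors of the wrapped horizontal ends and the glued graph $\Gamma'$ has genus $g$, then the cut graph $\Gamma$ has genus $g-\ell(\mu)-\ell(\phi)$, since cutting $\ell(\mu)+\ell(\phi)$ independent cycles drops the first Betti number by that amount. This explains the subscript $g-\ell(\mu)-\ell(\phi)$ on the relative invariant.

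Next I would check that multiplicities match. The multiplicity of a tropical stable map to $E_\TT\times\PP^1_\TT$ (Definition \ref{def-mult}) is the product over all non-contracted trivalent vertices of $|\det(v_1,v_2)|$, and cutting the horizontal edges does not create or destroy any such vertex, so the vertex-multiplicity products on both sides agree. The relative multiplicity of Definition \ref{def-multrel} differs only by the prefactor $\frac{1}{\prod\phi_i^+\prod_j\phi_j^-}$; in our symmetric situation $(\phi^+,\mu^+)=(\phi,\mu)$ and $(\phi^-,\mu^-)=(\mu,\phi)$, so this prefactor is $\frac{1}{\prod_j\phi_j}$. Combining this with the gluing factor $\prod_i\mu_i\prod_j\phi_j$ and the symmetry-group correction yields exactly the weight $\frac{\prod_i\mu_i\prod_j\phi_j}{|\Aut(\mu)||\Aut(\phi)|}$ appearing in the proposition, so summing over all cut-types $(\mu,\phi)\vdash d_1$ reproduces $N^{\trop,\bullet}_{(\Delta,d_1,d_2,g)}$.

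I expect the main obstacle to be the careful accounting of the combinatorial weights, in particular disentangling the three contributions---the $w$-fold gluing multiplicity per wrapped edge, the $\frac{1}{\prod\phi_j}$ in the relative multiplicity convention, and the automorphism factors $|\Aut(\mu)|,|\Aut(\phi)|$---and verifying that their product collapses to the stated coefficient without double-counting. The subtlety is that ends of equal expansion factor are interchangeable, so a naive product-of-$w$ count overcounts by precisely the symmetry orders, and one must also be careful that the ``$\phi$-ends fixed / $\mu$-ends moving'' convention does not introduce spurious factors when a partition has repeated parts. A secondary point to handle cleanly is the genericity argument ensuring that after a small shift of the point conditions every contributing cut map is trivalent and a local embedding, so that Definition \ref{def-mult} applies verbatim; this is the standard transversality input and I would cite the plane-curve theory (\cite{Mi03, GM051}) rather than reprove it.
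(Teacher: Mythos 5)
Your overall strategy --- cut along the glued horizontal edges, identify the result with a relative tropical stable map, and track the combinatorial weights --- is the same as the paper's, and your treatment of the genus shift and of the $|\Aut(\mu)||\Aut(\phi)|$ factors (forgetting the markings of the horizontal ends) is correct. But the multiplicity accounting, which you yourself flag as the crux, does not close up. First, the numerator $\prod_i\mu_i\prod_j\phi_j$ does not come from a ``$w$-fold gluing multiplicity'': in this paper's conventions the gluing of a matched pair of ends (same expansion factor, same $y$-coordinate) is unique, and the multiplicity of a tropical stable map to $E_\TT\times\PP^1_\TT$ (Definition \ref{def-mult}) is just the product of vertex determinants, with no gluing factors. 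The numerator arises solely because Definition \ref{def-multrel} divides that same product by $\prod_i\phi^+_i\prod_j\phi^-_j$, the expansion factors of the ends whose $y$-coordinates are fixed. Second, you evaluate that prefactor incorrectly: with $(\phi^+,\mu^+)=(\phi,\mu)$ and $(\phi^-,\mu^-)=(\mu,\phi)$ it equals $1/\bigl(\prod_i\phi_i\prod_j\mu_j\bigr)$, not $1/\prod_j\phi_j$. If you now also insert your gluing factor $\prod_i\mu_i\prod_j\phi_j$, the two cancel and the numerator of the stated weight disappears; with your miscomputed prefactor the product is $\prod_i\mu_i$, which is also not the stated weight. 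Either way the bookkeeping double-counts.

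The missing idea that makes the correct accounting work is the paper's fixed/moving dichotomy. Delete the contracted ends from $\Gamma$; a horizontal end is \emph{fixed} if it is the unique end of its connected component (its $y$-coordinate is then pinned by the point conditions) and \emph{moving} otherwise. Then $\phi$ is, by definition, the partition of expansion factors of the fixed $(-1,0)$-ends and $\mu$ that of the moving ones, and the gluing condition forces the roles to swap on the $(1,0)$-side: there the $\mu$-ends are fixed and the $\phi$-ends moving. This is what makes the pair $(\mu,\phi)$ in the sum uniquely determined by the map --- in your write-up it is an essentially arbitrary splitting of the multiset of expansion factors --- and it is exactly why the ends carrying $y$-coordinate conditions are the $\phi$-ends on the left and the $\mu$-ends on the right, so that the prefactor of Definition \ref{def-multrel} is $1/\bigl(\prod_i\phi_i\prod_j\mu_j\bigr)$ and the numerator $\prod_i\mu_i\prod_j\phi_j$ compensates for it exactly, with no gluing factor needed.
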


\begin{proof}
Fix a tropical stable map $(\Gamma,f)$ to $E_\TT\times\PP^1_\TT$ contributing to $N^{\trop,\bullet}_{(\Delta,d_1,d_2,g)}$. 
Consider $\Gamma$ minus the closures of the contracted ends. Since the point conditions are in general position, each connected component contains at least one end. 
There can be connected components of $\Gamma$ which just consist of a single unbounded edge of primitive direction $(\pm 1,0)$, we will consider the left and the right part as an end. Such connected components arise from edges of the glued graph $\Gamma'$ from Construction \ref{const-gluing}(\ref{const-gluing1}) which are curled multiple times.
Let us first consider ends which are not part of such connected components.

An end is fixed by the point conditions if it is the unique end in its connected component. 
The other ends are moving: we can form a $1$-parameter family of tropical stable maps of the same combinatorial type that still meet the point conditions by shifting one of the moving ends slightly and letting the other edges follow. 

We treat one end of a component consisting of a single unbounded edge as a fixed end, and the other as a moving end, opposite to the assignment of the end they glue to.

Let $\phi$ be the partition of expansion factors of ends of primitive direction $(-1,0)$ which are fixed by the point conditions, and $\mu$ the partition of expansion factors of ends of primitive direction $(-1,0)$ which are moving.
Then $(\mu,\phi)$ is a partition of $d_1$. 
Furthermore, the gluing condition implies that the expansion factors of the ends of primitive direction are also given by $(\mu,\phi)$, however, the ones corresponding to $\phi$ must be moving, since the gluing already imposes a condition on them. Vice versa, the ones corresponding to $\mu$ must be fixed, since they impose conditions by gluing.

In this way, we can interpret each tropical stable map to $E_\TT\times\PP^1_\TT$ as a relative tropical stable map.

The factors of $\prod_i \mu_i \prod_j\phi_j$ show up because the multiplicity of the relative tropical stable maps differs from the multiplicity of the tropical stable map to $E_\TT\times\PP^1_\TT$ by factors of $\frac{1}{w}$ for each expansion factor $w$ of fixed ends, and the ends with expansion factors $\phi_j$ are fixed on the left while the ones with expansion factors $\mu_i$ are fixed on the right.

The factors of $\frac{1}{|\Aut(\mu)||\Aut(\phi)|}$ show up because the relative tropical stable maps have marked ends of primitive direction $(\pm 1,0)$, and we forget such markings for tropical stable maps to $E_\TT\times\PP^1_\TT$. 
\end{proof}

\subsection{Correspondence Theorems}

Correspondence Theorems for relative Gromov-Witten invariants of Hirzebruch surfaces have been studied before \cite{GM052,CJMR16}:
\begin{theorem}
Relative Gromov-Witten invariants of $\PP^1\times\PP^1$ are equal to their tropical counterparts. This holds both for the connected and the disconnected theory:
$$\langle (\mu,\phi)|\tau_{0}(pt)^n |(\phi,\mu) \rangle_{g, n}^{\PP^1\times\PP^1,(d_1,d_2)}=
\langle (\mu,\phi)|\tau_{0}(pt)^n |(\phi,\mu) \rangle_{g, n}^{\trop,\lbrace 0,\dots,0\rbrace,(d_1,d_2)}\mbox{ and }$$
$$
\langle (\mu,\phi)|\tau_{0}(pt)^n |(\phi,\mu) \rangle_{g, n}^{\PP^1\times\PP^1,(d_1,d_2),\bullet}= \langle (\mu,\phi)|\tau_{0}(pt)^n |(\phi,\mu) \rangle_{g, n}^{\trop,\lbrace 0,\dots,0\rbrace,(d_1,d_2),\bullet}.$$
\end{theorem}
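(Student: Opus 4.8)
The plan is to reduce both equalities to the correspondence theorems for relative Gromov--Witten invariants of Hirzebruch surfaces established in \cite{GM052, CJMR16}, after identifying $\PP^1\times\PP^1$ with the toric surface $\mathbb{F}_0$ and matching all the discrete data on the two sides. First I would recall that the $0$- and $\infty$-sections are two opposite toric boundary divisors of $\mathbb{F}_0$, so that the tangency data in Definition \ref{def-relGWI} are exactly tangency conditions along toric divisors, placing the invariant within the scope of the cited theorems. Since the leaky degree is $\lbrace 0,\dots,0\rbrace$, there is no leaking, so all upper ends in Definition \ref{def-troprelstablemap} are genuinely vertical of direction $(0,1)$; hence the dual Newton polygon is an honest rectangle with side lengths $d_1$ and $d_2$, namely the Newton polygon of $\PP^1\times\PP^1$ of bidegree $(d_1,d_2)$. (Had $\Delta$ been nonzero, the polygon would be sheared and the target would no longer be $\PP^1\times\PP^1$, which is precisely why the theorem is stated without leaking.) Under the standard dictionary, the $d_2$ ends of direction $(0,\pm 1)$ are dual to the two horizontal edges of lattice length $d_2$, while the horizontal ends of primitive direction $(\pm 1,0)$, whose expansion factors realize the tangency partitions, are dual to the two vertical edges of lattice length $d_1$.

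Second, I would match the conditions and the multiplicities. The point insertions $\tau_0(pt)^n$ correspond on the tropical side to the $n$ contracted marked ends constrained to pass through $p_1,\dots,p_n$, while the fixed $y$-coordinates imposed on the ends corresponding to $\phi^+$ and $\phi^-$ realize the evaluation conditions $\widehat{\ev}_i^\ast([pt])$ at the marked contact points; the ends carrying $\mu^+$ and $\mu^-$ are left free, matching the unconstrained relative markings and the fixed-versus-moving distinction used throughout Section \ref{sec-trop}. On the multiplicity side, Mikhalkin's vertex multiplicity $|\det(v_1,v_2)|$ from Definition \ref{def-mult} is exactly the local algebraic contribution appearing in \cite{GM052, CJMR16}, and the normalizing factor $\frac{1}{\prod_i\phi^+_i\prod_j\phi^-_j}$ of Definition \ref{def-multrel} reflects the usual relative-multiplicity convention for tangencies along the two sections. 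With these identifications in place, the connected equality is precisely the correspondence proved in \cite{GM052, CJMR16}; alternatively, one could run a toric degeneration argument in the style of \cite{NS06, Mi03} directly on $\mathbb{F}_0$ relative to its vertical boundary.

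Finally, I would deduce the disconnected equality from the connected one. A disconnected relative stable map is a disjoint union of connected ones whose genera, degrees, and splittings of the tangency partitions add up to the prescribed totals, and both the algebraic and the tropical disconnected invariants are assembled from their connected counterparts by the same combinatorial sum over such decompositions, weighted by the automorphisms permuting isomorphic components. Since the multiplicity is multiplicative over connected components on both sides, the connected correspondence propagates term by term to the disconnected statement. The main obstacle I anticipate is not any single step but the careful bookkeeping of conventions: verifying that the relative-to-two-sections setup, the fixed-versus-moving roles of $\phi$ and $\mu$, and the precise multiplicity normalizations all align exactly with the hypotheses of \cite{GM052, CJMR16}, so that those results apply verbatim rather than merely in spirit.
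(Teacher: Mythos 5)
Your proposal is correct and follows essentially the same route as the paper: the paper offers no proof of its own for this theorem, simply invoking the known correspondence theorems for relative Gromov--Witten invariants of Hirzebruch surfaces from \cite{GM052, CJMR16}, which is exactly the reduction you carry out. Your write-up merely makes explicit the dictionary (Newton polygon, fixed versus moving ends for $\phi$ and $\mu$, the $\frac{1}{\prod_i\phi^+_i\prod_j\phi^-_j}$ normalization, and the standard connected-to-disconnected assembly) that the paper leaves implicit in the citation.
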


Using the degeneration formula in Proposition \ref{prop-degeneration} together with the tropical relation of counts of stable maps to $E_\TT\times\PP^1_\TT$ and relative tropical stable maps, Proposition \ref{prop-tropdeg}, we can deduce:

\begin{theorem}[Correspondence Theorem]\label{thm-corres}
Gromov-Witten invariants of $E\times\PP^1$ agree with the corresponding tropical counts of stable maps to $E_\TT\times\PP^1_\TT$ without leaking:
$$N^\bullet_{(d_1,d_2,g)}= N^{\trop,\bullet}_{(d_1,d_2,g)}.$$
\end{theorem}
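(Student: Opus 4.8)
The plan is to chain together the two degeneration formulas --- the algebraic one of Proposition \ref{prop-degeneration} and its tropical analogue Proposition \ref{prop-tropdeg} --- using the correspondence theorem for relative Gromov-Witten invariants of $\PP^1\times\PP^1$ stated immediately above. Since all three statements are phrased in the disconnected theory and are indexed over the very same set of partition pairs, the deduction reduces to matching the two sides summand by summand.

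First I would apply Proposition \ref{prop-degeneration} to expand the left-hand side as
$$N_{(d_1,d_2,g)}^\bullet= \sum_{(\mu,\phi)\; \vdash d_1} \frac{\prod_i \mu_i \prod_j\phi_j}{|\Aut(\mu)||\Aut(\phi)|} \langle (\mu,\phi)|\tau_{0}(pt)^n |(\phi,\mu) \rangle_{g-\ell(\mu)-\ell(\phi), n}^{\PP^1\times\PP^1,(d_1,d_2),\bullet}.$$
Next I would invoke the disconnected correspondence theorem for relative invariants of $\PP^1\times\PP^1$ to replace each relative Gromov-Witten invariant by the relative tropical count of the \emph{same} genus $g-\ell(\mu)-\ell(\phi)$, the same contact data, and leaky degree $\lbrace 0,\dots,0\rbrace$:
$$\langle (\mu,\phi)|\tau_{0}(pt)^n |(\phi,\mu) \rangle_{g-\ell(\mu)-\ell(\phi), n}^{\PP^1\times\PP^1,(d_1,d_2),\bullet}= \langle (\mu,\phi)|\tau_{0}(pt)^n |(\phi,\mu) \rangle_{g-\ell(\mu)-\ell(\phi), n}^{\trop,\lbrace 0,\dots,0\rbrace,(d_1,d_2),\bullet}.$$
Finally I would recognize the resulting weighted sum as exactly the right-hand side of Proposition \ref{prop-tropdeg} specialized to $\Delta=\lbrace 0,\dots,0\rbrace$, which by Definition \ref{def-tropGW} equals $N^{\trop,\bullet}_{(d_1,d_2,g)}$. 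Stringing the three equalities together yields the claim.

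The point requiring care --- and hence the main obstacle --- is verifying that the combinatorial data of the two degeneration formulas coincide \emph{verbatim}, so that the middle correspondence can be applied term by term at matching genus. Both sums range over pairs of partitions $(\mu,\phi)\vdash d_1$, both carry the identical weight $\tfrac{\prod_i\mu_i\prod_j\phi_j}{|\Aut(\mu)||\Aut(\phi)|}$, and both impose the same genus drop $\ell(\mu)+\ell(\phi)$. On the tropical side this structure is precisely what the proof of Proposition \ref{prop-tropdeg} establishes: the factors $\prod_i\mu_i\prod_j\phi_j$ arise from the $\tfrac1w$ multiplicity corrections for the fixed ends, the automorphism factors account for the forgotten markings of the $(\pm1,0)$-ends, and the genus shift comes from cutting the $\ell(\mu)+\ell(\phi)$ glued pairs. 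Once it is confirmed that these weight factors and genus shifts are genuinely the same as those produced by the algebraic degeneration formula, the equality $N^\bullet_{(d_1,d_2,g)}=N^{\trop,\bullet}_{(d_1,d_2,g)}$ follows at once.
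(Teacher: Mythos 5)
Your proposal is correct and follows exactly the route the paper takes: expand via the degeneration formula of Proposition \ref{prop-degeneration}, substitute the relative correspondence theorem for $\PP^1\times\PP^1$ term by term, and recognize the result as the right-hand side of Proposition \ref{prop-tropdeg} with $\Delta=\lbrace 0,\dots,0\rbrace$. The paper states this deduction in a single sentence, so your verification that the partition indexing, weights, and genus shifts match verbatim is a welcome elaboration rather than a deviation.
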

Since connectedness can be read off the dual graph of a degeneration, we can also deduce the version for connected numbers:
$$N_{(d_1,d_2,g)}= N^{\trop}_{(d_1,d_2,g)}.$$

\section{Pearl chains}\label{sec-pearl}
Using a floor diagram technique, we introduce a finite method to list all tropical stable maps of genus $g$, leaky degree $\Delta$ and bidegree $(d_1,d_2)$ to $E_\TT\times\PP^1_\TT$ --- we count \emph{curled pearl chains}.

\subsection{Pearl chains and curled pearl chains}
\begin{definition}\label{def-pearl}
Let $d_2$ and $g$ be positive integers. A \emph{pearl chain} of type $(d_2,g)$ is a (non-metric) connected graph $\mathcal{P}$ of genus $g$. It has $d_2$ white and $d_2+g-1$ black vertices. Edges can only connect a white with a black vertex, but not vertices of the same color. Black vertices must be $2$-valent, white vertices can have any valency. There are no cycles of length two.
\end{definition}

\begin{figure}[H]
\centering
\def\svgwidth{275pt}
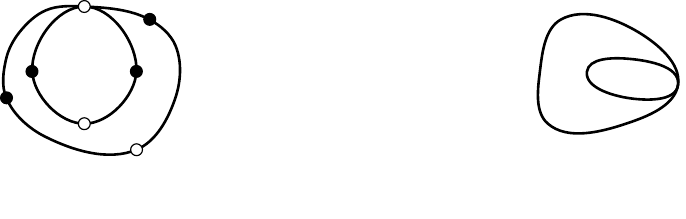
\caption{Some examples of pearl chains of type $(3,2)$ and a non-example (the right picture contains a cycle of length $2$).}
\label{fig-Pic6_ink}
\end{figure}

To define curled pearl chains, we have to define tropical covers first:

A \emph{tropical cover} $\pi:\Gamma_1\rightarrow \Gamma_2$ is a surjective map of metric graphs. The map $\pi$ is piecewise integer affine linear, the slope of $\pi$ on a flag or edge $e$ is a nonnegative integer called the \emph{expansion factor} $w_e\in \NN_{> 0}$.

For a point $v\in \Gamma_1$, and a flag $f'$ adjacent to $\pi(v)$, the \emph{local degree of $\pi$ at $v$ w.r.t.\ $f'$} is defined as the sum of the expansion factors of all flags $f$ adjacent to $v$ that map to $f'$:
\begin{equation}
d_{v,f'}=\sum_{f\mapsto f'} w_f.
\end{equation} 
We define the \textit{harmonicity} or \textit{balancing condition} at $v$ to be the fact that the local degree is independent of the choice of $f'$ (see \cite{ABBR1}, Section 2). 

Consider a tropical cover $\pi:\Gamma\rightarrow E_\TT$ to a tropical elliptic curve, and assume that the images of the vertices of $\Gamma$ are distinct. Let $p\in E_\TT$ be the image of the vertex $v$. For each other point in $\pi^{-1}(p)$, balancing is satisfied. Fix an orientation of $E_\TT$. Let $f_1$ and $f_2$ be the two flags of $E_\TT$ adjacent to $p$, ordered such that the orientation is respected. We say that there is a \emph{leaking} of $L\in\ZZ$ at $p$ (resp.\ at $v$) if $d_{v,f_1}-d_{v,f_2}=L$. If not all vertices of a tropical cover $\pi:\Gamma\rightarrow E_\TT$ satisfy balancing, but some have leaking, we say that it is a \emph{leaky tropical cover}.

For a leaky tropical cover $\pi:\Gamma\rightarrow E_\TT$, we let the \emph{degree} be the minimum of all sums over all local degrees of preimages of a point $a$ w.r.t.\ an adjacent flag $f'$, $d=\sum_{p\mapsto a} d_{p,f'}$.

\begin{definition}\label{def-curledpearl}
Fix positive integers $d_2,g$ and a multiset $\Delta=\lbrace L_1,\dots,L_{d_2} \rbrace$ containing elements of $\ZZ$ such that $\sum_{i=1}^{d_2} L_i=0$. Let $n=2d_2+g-1$. 
Let $E_\TT$ be a tropical elliptic curve on which we fix $n+1$ points $p_0,\ldots,p_n$ (notice that this choice fixes an orientation of $E_\TT$). Let $\mathcal{P}$ be a pearl chain of type $(d_2,g)$.
A \emph{curled pearl chain} of type $(\Delta,d_2,g)$ is a leaky tropical cover $\pi:\mathcal{P}'\rightarrow E_\TT$, where $\mathcal{P}'$ is a metrization of $\mathcal{P}$, such that each $\pi^{-1}(p_i)$ contains one vertex for $i=1,\ldots,n$, and such that 
\begin{itemize}
\item each element $L$ of $\Delta$ corresponds to a white vertex with leaking $L$ (in particular, there are $d_2$ white vertices), and
\item the black vertices are balanced.
\end{itemize}
\end{definition}

\begin{definition}\label{def-countpearl}
Let $N^{\pearl}_{(\Delta,d_1,d_2,g)}$ be the weighted \emph{count of curled pearl chains} of type $(\Delta,d_2,g)$ and degree $d_1$. Each curled pearl chain is counted with multiplicity $\prod_e w_e$, the product over all expansion factors of edges.
\end{definition}

\begin{example}\label{ex-countingpcs}
We want to determine $N^{\pearl}_{(\lbrace 0,\dots,0\rbrace,2,2,1)}$. We list curled pearl chains of type $(\lbrace 0,\dots,0\rbrace,2,1)$ and degree $2$ below, where we suppress $E_\TT$ and the map $\pi$ and fix the upper white vertex as preimage of $p_1$ instead (the numbers $i$ of the vertices in Figure \ref{fig-Pic7_ink} indicate to which point $p_i$ on $E_\TT$ the vertices are mapped to). One curled pearl chain has multiplicity $2^4=16$ and the rest has multiplicity $1$. So Figure \ref{fig-Pic7_ink} yields $16+14=30$ curled pearl chains counted with multiplicity. Since all vertices are $2$-valent, we can exchange the colors (i.e. fix a black vertex as preimage of $p_1$) and obtain a factor $2$. Therefore $N^{\pearl}_{(\lbrace 0,\dots,0\rbrace,2,2,1)}=60.$

\begin{figure}[]
\centering
\def\svgwidth{390pt}
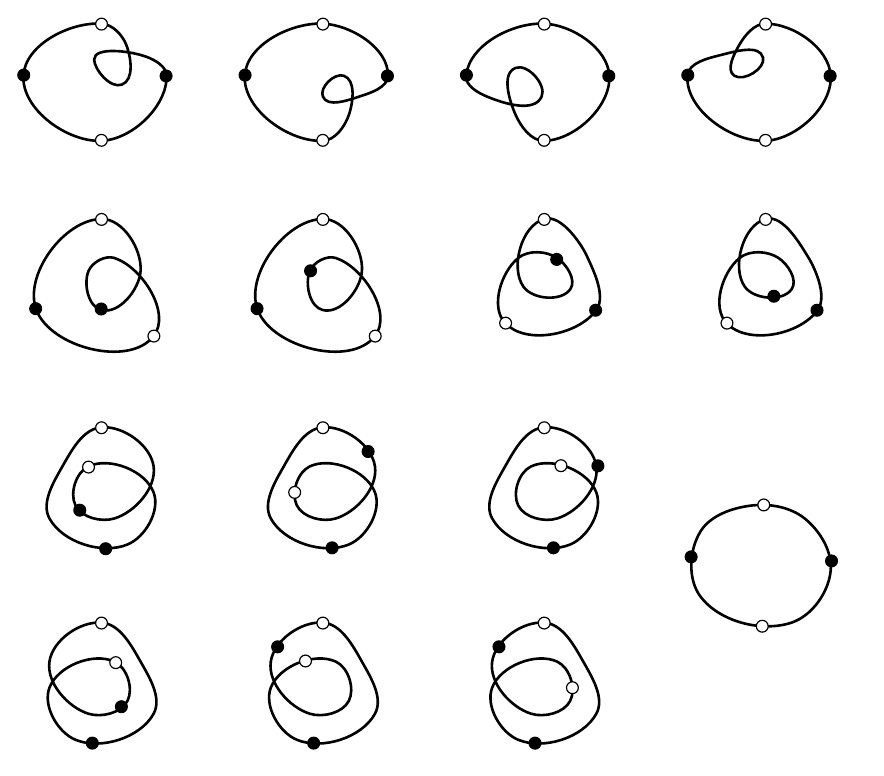
\caption{Curled pearl chains contributing to $N^{\pearl}_{(\lbrace 0,\dots,0\rbrace,2,2,1)}$.}
\label{fig-Pic7_ink}
\end{figure}

\end{example}

\subsection{Floor-decomposed tropical stable maps to $E_\TT\times\PP^1_\TT$}

Since the images of tropical stable maps to $E_\TT\times\PP^1_\TT$ can be viewed as tropical plane curves, we can also make use of the floor diagram technique 
\cite{BM08, FM09}: we use a horizontally stretched set of point conditions (see Definition 3.1 \cite{FM09}).

Every tropical stable map $(\Gamma,f)$ of leaky degree $\Delta=\lbrace L_1,\dots,L_{d_2}\rbrace$ satisfying these conditions is \emph{floor-decomposed}, i.e.\ every connected component of $\Gamma$ minus the edges of primitive direction $(\pm 1,0)$ (called a \emph{floor}) contains exactly one marked end, one end of direction $(0,-1)$ and one end of some direction $(L_i,1)$. Furthermore, each edge of primitive direction $(\pm 1,0)$ (called an \emph{elevator}), except for $n_1$ ends whose $y$-coordinates are imposed by the gluing condition, is adjacent to precisely one marked end.

\begin{construction}\label{const-pcfromtrop}
Given a floor-decomposed tropical stable map $(\Gamma,f)$ to $E_\TT\times\PP^1_\TT$ of bidegree $(d_1,d_2)$ and genus $g$ satisfying $n=2d_2+g-1$ horizontally stretched point conditions, we associate a curled pearl chain $\pi:\mathcal{P}'\rightarrow E_\TT$  as follows:

As in Construction \ref{const-gluing}(\ref{const-gluing1}), let $\Gamma'$ be the graph obtained from $\Gamma$ by gluing the pairs of ends of primitive direction $(\pm 1,0)$.
Shrink each floor in $\Gamma'$ to a white vertex. Shrink the marked points on elevators to black vertices. The graph obtained in this way is $\mathcal{P}'$. Its edges correspond to elevator edges of $\Gamma'$, see Figure \ref{fig-Pic8_ink}.

Let $\mbox{pr}$ denote the vertical projection of $E_\TT\times\PP^1_\TT$ to a tropical elliptic curve and let $p_1,\ldots,p_n$ be the images of the horizontally stretched point conditions. The map $\pi$ can be viewed as the composition of (the extension of) $f$ with $\mbox{pr}$.
It maps the white vertex arising from the floor containing $x_i$ to $p_i$ in $E$, and the black vertex arising from $x_j$ to $p_j$. The edge of $\mathcal{P}$ arising from an elevator edge of $\Gamma'$ is mapped as the projection of the elevator edge.
The expansion factors $w_e$ of the edges of $\mathcal{P}$ are given as the expansion factors of the corresponding elevator edges of $\Gamma'$.  Note that the leaky degree of the tropical stable map and the leaking of the associated curled pearl chain coincide.
\end{construction}

\begin{figure}[]
\centering
\def\svgwidth{380pt}
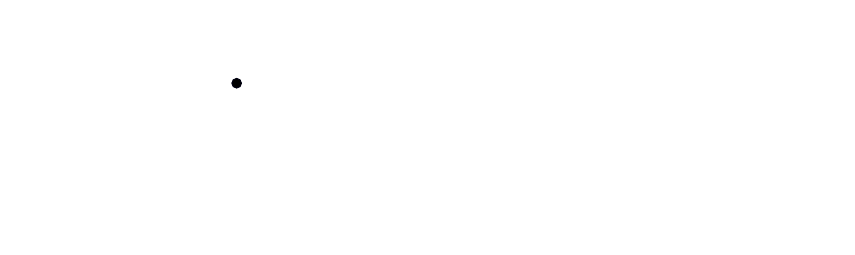
\caption{Left: The tropical stable map to $E_\TT\times\PP^1_\TT$ of leaky degree $\lbrace -1,1\rbrace$ with a curled edge from Example \ref{ex-dual}, where we indicated the floors. Right: The curled pearl chain construction \ref{const-pcfromtrop} associates to this curve (we suppressed the map $\pi$). }
\label{fig-Pic8_ink}
\end{figure}

\begin{lemma}\label{lem-pcfromtrop}
Construction \ref{const-pcfromtrop} associates a curled pearl chain $\pi:\mathcal{P}'\rightarrow E_\TT$ of type $(\Delta,d_2,g)$ and degree $d_1$ to a floor-decomposed connected tropical stable map $(\Gamma,f)$ to $E_\TT\times\PP^1_\TT$ of leaky degree $\Delta$, bidegree $(d_1,d_2)$ and genus $g$.
\end{lemma}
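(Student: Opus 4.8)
The plan is to verify that the pair $(\pi\colon\mathcal{P}'\to E_\TT)$ produced by Construction \ref{const-pcfromtrop} meets every requirement of Definitions \ref{def-pearl} and \ref{def-curledpearl}, organised into three checks: (i) the underlying graph $\mathcal{P}$ is a pearl chain of type $(d_2,g)$; (ii) the map $\pi$ is a leaky tropical cover whose leaking data is exactly $\Delta$; and (iii) the degree of $\pi$ equals $d_1$. Throughout I would exploit the floor-decomposition structure that the horizontally stretched point conditions guarantee, together with the balancing condition of $f$.

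\textbf{Pearl chain structure (counts).} First I would count vertices. Each floor contains exactly one end of direction $(0,-1)$, and there are $d_2$ such ends, so there are $d_2$ floors and hence $d_2$ white vertices. Each floor also contains exactly one of the $n$ marked ends, which accounts for $d_2$ of them; the remaining $n-d_2=d_2+g-1$ marked ends lie on elevators and become the black vertices. Since every elevator carries precisely one marked end, each black vertex subdivides its elevator into two, so black vertices are $2$-valent and every edge of $\mathcal{P}'$ joins a white to a black vertex (white vertices carry no valency constraint). Connectivity of $\mathcal{P}'$ follows from that of $\Gamma'$. As each floor is a tree, contracting the floors preserves the first Betti number, and subdividing edges by black vertices does too; hence $\mathcal{P}'$ has genus $b_1(\Gamma')=g$. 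This matches the count $d_2$ white, $d_2+g-1$ black, genus $g$.

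\textbf{The main obstacle: no $2$-cycles.} The structural condition requiring the most care is the absence of $2$-cycles, which amounts to showing that no elevator has both endpoints on the \emph{same} floor. Here I would use that, in the horizontally stretched limit, each floor degenerates to a tree that is strictly monotone in the vertical coordinate: a turn-around in height would need horizontal room that disappears as the points are stretched, while floor edges stay non-horizontal. Consequently each height is attained at most once along a floor, so a horizontal elevator, which lies at a single height, cannot meet the same floor twice. This excludes self-loops and hence $2$-cycles; note that double edges between \emph{distinct} floors are harmless, as the intervening black vertices turn them into $4$-cycles. Tying this purely combinatorial condition of Definition \ref{def-pearl} to the geometry of the stretched limit is the step I expect to be most delicate.

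\textbf{Leaky cover and degree.} For (ii), $\pi=\mathrm{pr}\circ f$ is integer affine with nonnegative integer slopes on each edge (the elevator-halves are horizontal, mapped with slope $w_e$). At a black vertex the two incident edges are the two halves of one elevator, carrying equal expansion factor and pointing in opposite horizontal directions, so the local degree is independent of the flag and $\pi$ is harmonic there; black vertices are balanced. At a white vertex coming from a floor $F$ with up-end $(L_i,1)$, I would sum the balancing condition of $f$ over all vertices of $F$: the internal edges cancel, the contracted marked end contributes $0$, and the horizontal components of the remaining flags, namely the drift $L_i$ of the up-end, the vanishing drift of the down-end, and the signed expansion factors of the attached elevators, must sum to zero. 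Comparing this with the two flags of $E_\TT$ at $p_i$ shows that the difference of local degrees equals $L_i$ once the orientation induced by $p_0,\dots,p_n$ is fixed, so the white vertex has leaking $L_i$; thus the multiset of leakings is exactly $\Delta$ and there are $d_2$ white vertices, giving type $(\Delta,d_2,g)$. Each vertex sits at a distinct marked point, so every fibre $\pi^{-1}(p_i)$ contains a single vertex. Finally, for (iii), the degree of $\pi$, being the minimum over $a\in E_\TT$ of the total local degree, equals the minimal horizontal width of $f(\Gamma')$; this is precisely the quantity that defines $d_1$ as the minimal distance of the dual subdivision to its base line, so $\deg\pi=d_1$.
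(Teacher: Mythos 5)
Your proposal follows the same overall strategy as the paper's proof: count floors and marked points to get the vertex numbers, use the one-marked-point-per-elevator property to get $2$-valence of black vertices and the bipartite structure, preserve the Betti number under contraction, and read off balancing at black vertices and leaking $L_i$ at white vertices from the balancing condition of $f$; the degree comparison is also the paper's. The one step where you genuinely diverge is the exclusion of $2$-cycles. The paper argues that an elevator returning to the same floor at the same $y$-coordinate cannot attach at a $3$-valent vertex by balancing, so the elevator loop would split off as a separate connected component, contradicting connectedness. You instead argue that each floor is strictly monotone in the vertical coordinate, so it meets a given height only once. Both routes work, but two points in yours deserve tightening. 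First, your justification of monotonicity (``a turn-around in height would need horizontal room that disappears as the points are stretched'') is a heuristic about the limit; the clean reason is balancing: every edge of a floor has $y$-direction $\pm 1$, and attaching a horizontal elevator at a $3$-valent vertex leaves the $y$-component unchanged, so the sign is constant along the path from the $(0,-1)$ end to the $(L_i,1)$ end. Second, monotonicity only forces the two endpoints of such an elevator to be the \emph{same} point of the floor, i.e.\ a loop edge at a single vertex; you still need to exclude that configuration, which follows from $3$-valence (the attachment point would be at least $4$-valent) or, as in the paper, from connectedness. With those two small repairs your argument is complete and essentially equivalent to the paper's.
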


\begin{proof}
Denote $\Delta=\lbrace L_1,\dots,L_{d_2}\rbrace$. The tropical stable map has $d_2$ ends of direction $(0,-1)$ and exactly one end of directions $(L_i,1)$ for each $i=1,\dots,d_2$, and each floor contains one end of direction $(0,-1)$ and one of direction $(L_i,1)$ for some $i$. The white vertices of $\mathcal{P}'$ come from the $d_2$ floors. The black vertices come from the remaining $d_2+g-1$ points. Every elevator edge of $\Gamma'$ must be fixed by a point, so the corresponding edge in $\mathcal{P}$ must be adjacent to a black vertex. An edge cannot connect two black vertices, since this would correspond to an elevator edge adjacent to two contracted ends, which would then be mapped to the same horizontal line. Since the point conditions are general, two contracted ends cannot be mapped to the same horizontal line. It follows that each edge of $\mathcal{P}'$ is adjacent to one black and one white vertex. A black vertex comes from a contracted end and its two adjacent elevator edges, it must be $2$-valent. The graph $\mathcal{P}'$ arises from the glued graph $\Gamma'$ of Construction \ref{const-gluing}(\ref{const-gluing1}) by shrinking floors resp.\ ends, hence it has the same genus. Assume $\mathcal{P}'$ had a cycle of length $2$, which necessarily connects a white with a black vertex. This would correspond to an elevator edge of $\Gamma'$, starting at the floor corresponding to the white vertex and returning to the same floor with the same $y$-coordinate. The balancing condition implies that the floor cannot have a $3$-valent vertex adjacent to those two edges, the elevator loop would form a separate connected component, which we excluded. Thus $\mathcal{P}'$ has no cycles of length $2$. It follows that $\mathcal{P}$, which equals $\mathcal{P}'$ after forgetting metric data, is a pearl chain of type $(d_2,g)$.
From the construction of the map $\pi$, it is clear that the preimage of each $p_i$ contains exactly one vertex. 
Also, the two flags adjacent to a black vertex are mapped to the two flags in $E$ adjacent to the image vertex, and that their expansion factors agree: they correspond to the two elevator edges adjacent to a contracted end, which by the balancing condition must have direction $(w_e,0)$ resp.\ $(-w_e,0)$. 
For a white vertex, which represents a floor of $(\Gamma,f)$, note that it has two non-contracted ends of direction $(0,-1)$ and $(L_i,1)$ for some $i$. The other edges leaving the floor are the elevators, they are of primitive direction $(\pm 1,0)$. It follows from the balancing condition that there is leaking of $\Delta$ at the white vertices. Thus,  $\pi:\mathcal{P}'\rightarrow E_\TT$ is a curled pearl chain of type $(\Delta,d_2,g)$. For a floor-decomposed tropical stable map, the first entry of the bidegree $(d_1,d_2)$ equals the minimum of the sums of expansion factors of all elevators passing a fixed vertical line. This equals the degree of the curled pearl chain constructed from $(\Gamma,f)$.
\end{proof}

\subsection{Counts of tropical stable maps to $E_\TT\times\PP^1_\TT$ of leaky degree and pearl chains}

\begin{theorem}\label{them-troppearl}
The count of curled pearl chains of type $(\Delta,d_2,g)$ and degree $d_1$ from Definition \ref{def-countpearl} equals the number of connected tropical stable maps to $E_\TT\times\PP^1_\TT$ of leaky degree $\Delta$, genus $g$ and bidegree $(d_1,d_2)$ (see Definition \ref{def-tropcutmap}):
$$N^{\pearl}_{(\Delta,d_1,d_2,g)}= N^{\trop}_{(\Delta,d_1,d_2,g)}.$$
\end{theorem}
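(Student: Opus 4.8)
The plan is to establish a multiplicity-preserving bijection between the floor-decomposed tropical stable maps contributing to $N^{\trop}_{(\Delta,d_1,d_2,g)}$ and the curled pearl chains counted by $N^{\pearl}_{(\Delta,d_1,d_2,g)}$. First I would invoke the invariance of the tropical count under the choice of generic point configuration (a standard feature of plane tropical curve counts, since the multiplicity of Definition \ref{def-mult} is a tropical invariant, cf.\ \cite{Mi03, GM051}). This allows me to compute $N^{\trop}_{(\Delta,d_1,d_2,g)}$ using a horizontally stretched configuration $p_1,\dots,p_n$, so that every contributing $(\Gamma,f)$ is floor-decomposed. Construction \ref{const-pcfromtrop} together with Lemma \ref{lem-pcfromtrop} then supplies a well-defined map sending each such $(\Gamma,f)$ to a curled pearl chain $\pi\colon\mathcal{P}'\to E_\TT$ of type $(\Delta,d_2,g)$ and degree $d_1$.

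Next I would construct the inverse. Given a curled pearl chain, the white vertices prescribe the floors, the black vertices prescribe the marked points lying on elevators, and the edges together with their expansion factors $w_e$ prescribe the elevators and their weights. Since each vertex of $\mathcal{P}'$ maps to a prescribed point $p_i\in E_\TT$, and the associated point condition in $\RR^2$ fixes the height of the corresponding floor resp.\ elevator, the positions of all floors and elevators are determined --- including the way the elevators wind around the cylinder, which is recorded by the edge lengths of the cover $\pi$. Reassembling this data and reinserting the two ends of direction $(0,-1)$ and $(L_i,1)$ on the floor carrying leaking $L_i$ yields, by the balancing condition, a unique tropical stable map of leaky degree $\Delta$, bidegree $(d_1,d_2)$ and genus $g$, taken up to the equivalence that forgets the markings of the $(\pm 1,0)$-ends. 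Checking that this assignment is a two-sided inverse of the map from Construction \ref{const-pcfromtrop} establishes the bijection on the level of combinatorial objects.

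It then remains to match multiplicities: I claim $\mult(\Gamma,f)=\prod_e w_e$ for every floor-decomposed $(\Gamma,f)$, the product running over the edges of the associated pearl chain. The only vertices of $\Gamma$ contributing to $\mult(\Gamma,f)$ are the $3$-valent branch vertices along the floors, and at such a vertex the three adjacent directions are a horizontal elevator $(\pm w,0)$ and two spine edges. Because the only ends carrying a nonzero vertical component are the $(0,-1)$- and $(L_i,1)$-ends, each contributing $\pm 1$, every spine edge has vertical component $\pm 1$; hence $\mult_V(\Gamma,f)=|\det((\pm w,0),(\ast,\pm 1))|=w$ equals the expansion factor of the elevator branching off at $V$. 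Since each such branch vertex corresponds to exactly one edge of $\mathcal{P}$ (its endpoint at the white vertex) of the same expansion factor, multiplying over all floor vertices gives $\mult(\Gamma,f)=\prod_e w_e$, which is precisely the pearl-chain multiplicity of Definition \ref{def-countpearl}.

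The main obstacle is the precise verification that the reconstruction in the second step is both well defined and injective: one must argue that the heights forced by the point conditions together with the edge lengths of the cover $\pi$ (encoding the curling) assemble into an actual tropical stable map without introducing extra coincidences, and that no two distinct tropical stable maps yield the same curled pearl chain. The bookkeeping of the gluing from Construction \ref{const-gluing}(\ref{const-gluing1}), the two-to-one correspondence between the elevators of $\Gamma'$ and the edges of $\mathcal{P}$ (each elevator being split by its black vertex into two equally-weighted edges), and the treatment of curled edges as separate connected components of $\Gamma$ are the delicate points. Once these are handled, the term-by-term equality of multiplicities proved above upgrades the bijection to the desired numerical identity $N^{\pearl}_{(\Delta,d_1,d_2,g)}=N^{\trop}_{(\Delta,d_1,d_2,g)}$.
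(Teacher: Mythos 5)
Your proposal follows essentially the same route as the paper's proof: reduce to horizontally stretched point configurations, use Construction \ref{const-pcfromtrop} and Lemma \ref{lem-pcfromtrop} for the forward map, reconstruct the tropical stable map floor by floor from the pearl-chain data for the inverse, and match multiplicities via the determinant computation $\mult_V(\Gamma,f)=|\det((\pm w,0),(\ast,\pm 1))|=w$ at the $3$-valent floor vertices. The delicate points you flag (curled edges as separate components, the elevator/edge bookkeeping through the black vertices) are exactly the ones the paper handles, so the argument is correct and not genuinely different.
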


\begin{proof}
Given a tropical stable map contributing to $N^{\trop}_{(\Delta,d_1,d_2,g)} $, we know from Construction \ref{const-pcfromtrop} and Lemma \ref{lem-pcfromtrop} how to construct a curled pearl chain contributing to $N^{\pearl}_{(\Delta,d_1,d_2,g)}$. Here, we want to show that Construction \ref{const-pcfromtrop} yields a bijection between the set of tropical stable maps contributing to $N^{\trop}_{(\Delta,d_1,d_2,g)} $ and the set of curled pearl chains contributing to $N^{\pearl}_{(\Delta,d_1,d_2,g)}$, and that each tropical stable map counts with the same weight as the associated curled pearl chain. 

Pick generic horizontally stretched points $p_1',\ldots,p_n'$ in $\RR^2$.
Given a curled pearl chain $\pi:\mathcal{P}'\rightarrow E_\TT$, each vertex of $\mathcal{P}$ corresponds to a point $p_1,\dots,p_{n}\in E_\TT$ via its image under the map $\pi$.
We construct a tropical stable map $(\Gamma,f)$ satisfying the point conditions $p_1',\ldots,p_n'$ starting from local pieces of the image $f(\Gamma)\subset \RR^2$.

For a black vertex mapping to $p_i\in E$, we draw germs of elevator edges adjacent to $p_i'$. Each elevator edge of the glued graph $\Gamma'$ must be fixed by a unique point. We draw end germs for elevator edges of $\Gamma$ which are moving ends, but whose $y$-coordinate is imposed by the gluing conditions. If an edge of $\mathcal{P}$ curls multiple times, this corresponds to multiple connected components of $\Gamma$ consisting of a single edge which is mapped to a horizontal line. 

Consider a white vertex mapping to $p_j \in E$. It corresponds to a floor which satisfies the point condition $p_j'$. In $\Gamma$, a floor can be viewed as a path connecting an end of direction $(0,-1)$ with an end of direction $(L_i,1)$ --- the choice of $i$ here depends on the leaking of the corresponding white vertex. The edges adjacent to this path in $\Gamma$ are elevator edges, and they correspond to the edges of $\mathcal{P}$ adjacent to the white vertex. There is a unique way to connect the corresponding elevator edges (for which the horizontal line to which they map is already fixed) via a path, and to map this floor with $f$, such that it meets the point $p_j'$. In this way, we have constructed a floor-decomposed stable map $(\Gamma,f)$. This construction is obviously inverse to Construction \ref{const-pcfromtrop}, and so it follows in particular that the discrete data matches.

 The multiplicity of $(\Gamma,f)$ is given by the product of its local vertex multiplicities (see Definition \ref{def-mult}). Each $3$-valent vertex $V$ which is not adjacent to a contracted end is contained in a floor. Thus $V$ is adjacent to an elevator $e$.  Hence the multiplicity $\mult_V(\Gamma,f)$ of $V$ is given by
\begin{align*}
\mult_V(\Gamma,f)=\lvert \det
\left(
\begin{array}{cc}
w_e&*\\
0&\pm 1
\end{array}
\right)
\rvert = w_e,
\end{align*}
where the first column is the direction of the elevator $e$ and the second column is the direction of another edge adjacent to $V$. From the directions of the non-contracted ends that belong to the floor --- $(0,-1)$ and $(L_i,1)$ --- and the balancing condition, it follows that the $y$-coordinate of the direction vector must be $\pm 1$.
Thus every such vertex $V$ contributes a factor of the expansion factor of its adjacent elevator edge. Vice versa, every elevator edge is adjacent to one contracted end and one such vertex $V$. It follows that $\mult(\Gamma,f)$ equals the product of the expansion factors of its elevators, which equals the weight with which the associated curled pearl chain contributes to $N^{\pearl}_{(\Delta,d_1,d_2,g)}$ by Construction \ref{const-pcfromtrop} and Definition \ref{def-countpearl}.

\end{proof}

\section{Generating series and Feynman integrals}\label{sec-Feynman}
In this section, we study generating series of the numbers $N^{\trop}_{(\Delta,d_1,d_2,g)}$ in terms of Feynman integrals.

\subsection{Feynman integrals}

\begin{definition}[The propagator]\label{def-prop}
 We define the \emph{propagator} $$p(z,q):=\frac{1}{4\pi^2}\wp(z,q)+\frac{1}{12}E_2(q^2)$$ in terms of the Weierstra\ss{}-P-function $\wp$ and the Eisenstein series $$E_2(q):=1-24\sum_{d=1}^\infty \sigma(d)q^d.$$ Here, $\sigma=\sigma_1$ denotes the sum-of-divisors function $\sigma(d)=\sigma_1(d)=\sum_{m|d}m$.
\end{definition}

Changing coordinates $x=e^{i\pi z} $, the propagator has the following nice form (see Theorem 2.22 \cite{BBBM13}):
\begin{equation}
 P(x,q)= - \sum_{d=1}^\infty d\cdot x^{d} - \sum_{n=1}^\infty \left(\sum_{d|n}d \left(x^{d}+ x^{-d}\right)\right)q^{n}.\label{eq-prop}
\end{equation}
Here, we let $|x|<1$ and use a geometric series expansion (see Lemma 2.23 \cite{BBBM13}).

\begin{definition}[Feynman integrals] \label{def-Feynman} 
Let $\mathcal{P}$ be a pearl chain of type $(d_2,g)$. We fix a labeling $x_1,\ldots,x_n$ of its vertices and a labeling $q_1,\ldots,q_r$ of its edges.

Let $\Omega$ be a total order of the $n$ vertices of $\mathcal{P}$.

Denote the vertices adjacent to the edge $q_k$ by $x_{k^1}$ and $x_{k^2}$, where we assume $x_{k^1}<x_{k^2}$ in $\Omega$.

For integers $l_1,\ldots,l_n$ and using equation (\ref{eq-prop}), we define the \emph{Feynman integral} for $\mathcal{P}$ and $\Omega$ to be
$$I^{l_1,\ldots,l_n}_{\mathcal{P},\Omega}(q)=\Coef_{[x_1^{l_1}\ldots x_n^{l_n}]} \prod_{k=1}^{r} P(\frac{x_{k^1}}{x_{k^2}},q)$$
and the \emph{refined Feynman integral} to be
$$I^{l_1,\ldots,l_n}_{\mathcal{P},\Omega}(q_1,\ldots,q_r)=\Coef_{[x_1^{l_1}\ldots x_n^{l_n}]}\prod_{k=1}^{r} P(\frac{x_{k^1}}{x_{k^2}},q_k).$$
Finally, we set 
$$I^{l_1,\ldots,l_n}_{\mathcal{P}}(q)=\sum_\Omega I^{l_1,\ldots,l_n}_{\mathcal{P},\Omega}(q) $$
where the sum goes over all $n!$ orders of the vertices of $\Gamma$, and
$$I^{l_1,\ldots,l_n}_{\mathcal{P}}(q_1,\ldots,q_r)= \sum_\Omega I^{l_1,\ldots,l_n}_{\mathcal{P},\Omega}(q_1,\ldots,q_r).$$

If we drop the superscript $l_1,\ldots,l_n$ in the notations above, then this stands for $l_i=0$ for all $i$.
\end{definition}

For the case $l_i=0$ for all $i$, a Feynman integral can be viewed as a path integral in complex analysis:

\begin{definition}[Feynman integrals in complex analysis]\label{def-int}
Let $\mathcal{P}$ be a labeled pearl chain as in Definition \ref{def-Feynman}. Let $\Omega$ be an order.

Pick starting points of the form $iy_1,\ldots, iy_{n}$ in the complex plane, where the $y_j$ are pairwise different small real numbers.
We define integration paths $\gamma_1,\ldots,\gamma_{n}$ by $$\gamma_j:[0,1]\rightarrow \mathbb{C}:t\mapsto iy_j+t,$$ such that the order of the real coordinates $y_j$ of the starting points of the paths equals $\Omega$. 
We then define the integral \begin{equation}I'_{\mathcal{P},\Omega}(q):= \int_{z_j\in \gamma_j} \prod_{k=1}^{r} \left(-p(z_{k^1}-z_{k^2},q)\right),\label{eq-Igamma}\end{equation}
where $p(z,q)$ is the propagator of Definition \ref{def-prop},
and the refined version
\begin{equation}I'_{\mathcal{P},\Omega}(q_1,\ldots,q_r):= \int_{z_j\in \gamma_j} \prod_{k=1}^{r} \left(-p(z_{k^1}-z_{k^2},q_k)\right).\end{equation}
Here, as in Definition \ref{def-Feynman}, $x_{k^1}$ and $x_{k^2}$ are the two vertices adjacent to an edge $q_k$. 

Finally, we set
$$I'_{\mathcal{P}}(q)= \sum_{\Omega} I'_{\mathcal{P},\Omega}(q) \mbox{ and } I'_{\mathcal{P}}(q_1,\ldots,q_r)= \sum_{\Omega} I'_{\mathcal{P},\Omega}(q_1,\ldots,q_r).$$ 
\end{definition}

Since the propagator $p(z,q)$  is an even function, it is not important here in which way the vertices $x_{k^1}$ and $x_{k^2}$ of $q_k$ are ordered. The order $\Omega$ is only important for the arrangement of the integration paths.

\begin{theorem}\label{thm-Feynmanversions}

The complex analysis version of Feynman integrals agrees with the version from Definition \ref{def-Feynman}, using constant coefficients of formal series:

$$I_{\mathcal{P},\Omega}(q) = I'_{\mathcal{P},\Omega}(q)\mbox{ and }I_{\mathcal{P},\Omega}(q_1,\ldots,q_r) =I'_{\mathcal{P},\Omega}(q_1,\ldots,q_r).$$
\end{theorem}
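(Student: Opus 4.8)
The plan is to fix one total order $\Omega$ and prove the two identities $I_{\mathcal{P},\Omega}(q)=I'_{\mathcal{P},\Omega}(q)$ and $I_{\mathcal{P},\Omega}(q_1,\dots,q_r)=I'_{\mathcal{P},\Omega}(q_1,\dots,q_r)$ for that $\Omega$; the refined statement is obtained verbatim, since the substitution below is carried out edge by edge and never mixes the formal variables of different edges, so the edge labels $q_k$ are merely spectators. Throughout I would treat $q$ (resp.\ $q_1,\dots,q_r$) as a formal variable, so that the asserted equalities are identities of power series in $q$ with complex coefficients. Extracting the coefficient of a fixed $q$-monomial then reduces each side to a \emph{finite} computation together with a single genuinely infinite geometric sum, namely the $q^0$-part $-\frac{y}{(1-y)^2}$ of each propagator factor; this keeps all convergence questions under control.

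The first step is to realize the constant-coefficient operator as an iterated contour integral. For each edge $q_k$ joining $x_{k^1}<_\Omega x_{k^2}$, the series $P(x_{k^1}/x_{k^2},q)$ is, by Equation (\ref{eq-prop}) and Lemma 2.23 of \cite{BBBM13}, the Laurent expansion of a fixed analytic germ in the regime $|x_{k^1}/x_{k^2}|<1$. I would choose radii $\rho_1,\dots,\rho_n>0$ strictly increasing along $\Omega$, which is possible precisely because $\Omega$ is a linear order, and which guarantees $\rho_{k^1}<\rho_{k^2}$, hence $|x_{k^1}/x_{k^2}|=\rho_{k^1}/\rho_{k^2}<1$, for \emph{every} edge of $\mathcal{P}$ simultaneously. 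On the polyannulus $\{|x_j|=\rho_j\}$ all geometric series converge absolutely, so one may expand, integrate termwise, and use $\frac{1}{2\pi i}\oint_{|x_j|=\rho_j}x_j^{m}\,\frac{dx_j}{x_j}=\delta_{m,0}$ to get
\[
I_{\mathcal{P},\Omega}(q)=\Coef_{[x_1^0\cdots x_n^0]}\prod_{k=1}^r P\!\left(\tfrac{x_{k^1}}{x_{k^2}},q\right)=\frac{1}{(2\pi i)^n}\oint\cdots\oint\ \prod_{k=1}^r P\!\left(\tfrac{x_{k^1}}{x_{k^2}},q\right)\prod_{j=1}^n\frac{dx_j}{x_j}.
\]

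The second step is the change of variables $x_j=e^{i\pi z_j}$ underlying Equation (\ref{eq-prop}). It turns each factor $P(x_{k^1}/x_{k^2},q)=P(e^{i\pi(z_{k^1}-z_{k^2})},q)$ into $-p(z_{k^1}-z_{k^2},q)$ (this sign, forced by the pole behaviour $\wp\sim z^{-2}$, is exactly why the propagators in Definition \ref{def-int} carry a minus), and it sends the circle $|x_j|=\rho_j=e^{-\pi y_j}$ to a horizontal path of imaginary part $y_j$, with $\frac{dx_j}{x_j}=i\pi\,dz_j$. Since the $\rho_j$ increase along $\Omega$, the $y_j$ are ordered by $\Omega$, which is exactly the arrangement of the paths $\gamma_j$ prescribed in Definition \ref{def-int}; that the orientation of an edge within the integrand is irrelevant is the evenness of $p$ noted after Definition \ref{def-int}, matching the $x\mapsto x^{-1}$ symmetry of the germ of $P$. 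Collecting the constants $(2\pi i)^{-n}$ against the Jacobian factors $(i\pi)^n$ and matching the traversal of the circle $|x_j|=\rho_j$ to the parametrization of $\gamma_j$ is then routine normalization bookkeeping, carried out exactly as in \cite{BBBM13}, and yields $I_{\mathcal{P},\Omega}=I'_{\mathcal{P},\Omega}$.

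The main obstacle is the first step: justifying that the nested-contour prescription dictated by $\Omega$ genuinely computes the formal constant coefficient. The delicate point is that each $P$ contains both positive and negative powers, so the value of $\Coef_{[x^0]}$ is sensitive to the chosen expansion regime; one must check that a single strictly $\Omega$-increasing choice of radii simultaneously places every edge of $\mathcal{P}$ into its convergent regime and that summation and the $n$-fold integration may be interchanged. As explained above this is tamed by working $q$-degree by $q$-degree, so that for each degree only the $q^0$-parts contribute an infinite sum. Structurally the argument is identical to that of \cite{BBBM13} for trivalent graphs $\Gamma$: only the combinatorial type changes from $\Gamma$ to the pearl chain $\mathcal{P}$, while the propagator, the constant-coefficient operator, and the integration paths are literally the same, so the cited argument applies mutatis mutandis.
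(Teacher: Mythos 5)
Your argument is correct and is essentially the same as the paper's: the paper proves this theorem simply by citing Lemma 2.25 of \cite{BBBM13}, and your contour-integral realization of the constant-coefficient operator followed by the substitution $x_j=e^{i\pi z_j}$ is precisely the argument of that cited lemma, transplanted from trivalent graphs to pearl chains. The only point you defer (matching the full circles $|x_j|=\rho_j$ to the length-one paths $\gamma_j$ and the attendant normalization constants) is exactly the bookkeeping handled in \cite{BBBM13}, so nothing is missing relative to what the paper itself supplies.
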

For a proof, see Lemma 2.25 \cite{BBBM13}.

\subsection{Generating series and Feynman integrals}
We first state the main theorem and its corrolaries involving generating series of counts of curled pearl chains, tropical stable maps of leaky degree, and Gromov-Witten invariants.

\begin{theorem}\label{thm-genseries=Feynman}
Fix positive integers $d_2,g$ and a multiset $\Delta=\lbrace L_1,\dots,L_{d_2} \rbrace$ containing elements of $\ZZ$ such that $\sum_{i=1}^{d_2} L_i=0$.
The generating series of counts of curled pearl chains of type $(\Delta,d_2,g)$ (see Definition \ref{def-countpearl}) equals a sum of Feynman integrals:
$$\sum_{d_1} N^{\pearl}_{(\Delta,d_1,d_2,g)} q^{d_1}= \sum_{\mathcal{P}} \frac{1}{|\Aut(\mathcal{P})|}\sum_{v}\sum_{\Omega} I^v_{\mathcal{P},\Omega}(q).$$
The first sum on the right hand side goes over all pearl chains of type $(d_2,g)$ (see Definition \ref{def-pearl}), the second over all vectors $v$ which associates the leakings $L_1,\dots,L_{d_2}$ to the white vertices of $\mathcal{P}$, and $0$ to the black vertices.
\end{theorem}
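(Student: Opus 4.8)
The plan is to reduce the statement to the general bijection between leaky tropical covers and monomials of Feynman integrals proved in \cite{BGM18} (Theorem 2.23), exploiting that a curled pearl chain is, by Definition \ref{def-curledpearl}, nothing but a leaky tropical cover $\pi\colon\mathcal{P}'\to E_\TT$ whose source is a metrization of a pearl chain $\mathcal{P}$. First I would organize the left hand side according to the underlying combinatorial type. Forgetting the metric and the map $\pi$ sends each curled pearl chain to a pearl chain $\mathcal{P}$ of type $(d_2,g)$, so $\sum_{d_1}N^{\pearl}_{(\Delta,d_1,d_2,g)}q^{d_1}=\sum_{\mathcal{P}}(\text{contribution of curled pearl chains over }\mathcal{P})$. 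For a fixed $\mathcal{P}$, the requirement that each element $L$ of $\Delta$ sits at a white vertex of leaking $L$ records an assignment of $\Delta$ to the white vertices, i.e.\ the vector $v$; and cutting $E_\TT$ at the auxiliary point $p_0$ turns the cyclic order of the vertex-images into a total order $\Omega$ of the vertices. Thus the task reduces to proving, for each $\mathcal{P}$, the identity $(\text{contribution over }\mathcal{P})=\frac{1}{|\Aut(\mathcal{P})|}\sum_{v}\sum_{\Omega}I^{v}_{\mathcal{P},\Omega}(q)$, which already has the shape of the right hand side.

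For fixed labeled $\mathcal{P}$ and fixed $v,\Omega$ I would set up the bijection one edge at a time. The propagator factor $P(x_{k^1}/x_{k^2},q)$ attached to the edge $q_k$ joining $x_{k^1}<x_{k^2}$ in $\Omega$ has the expansion \ref{eq-prop}, whose terms enumerate exactly the ways to route a single cover edge over $E_\TT$ between the two corresponding points: a term $-d\,x^{d}$ encodes an edge of expansion factor $w_e=d$ running ``forward'' without wrapping, while a summand of $-\sum_{d\mid n}d\,(x^{d}+x^{-d})\,q^{n}$ encodes an edge of expansion factor $d$ wrapping around $E_\TT$, the sign of the exponent recording the flow direction and the $q$-exponent recording that edge's contribution to the degree at the cut. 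Taking the product over the $r$ edges and applying $\Coef_{[x_1^{l_1}\cdots x_n^{l_n}]}$ glues the local routings into one global leaky cover: the exponent of $x_i$ is the net signed flow through the vertex over $p_i$, so the coefficient-extraction condition imposes balancing ($l_i=0$) at the black vertices and the prescribed leaking ($l_i=L$) at the white vertices. This is precisely the content of the bijection of \cite{BGM18}, which I would cite rather than reprove.

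It then remains to match the weights. The multiplicity $\prod_e w_e$ of a curled pearl chain (Definition \ref{def-countpearl}) equals the product $\prod_k d_k$ of the propagator coefficients up to the global sign $(-1)^r$; since a pearl chain of type $(d_2,g)$ has $r=2(d_2+g-1)$ edges (each of its $d_2+g-1$ two-valent black vertices carries two edge-ends, and every edge has exactly one black end), this sign is $+1$ and the two weights coincide. The power of $q$ carried by a monomial equals the sum of the per-edge degree contributions, which must be identified with the degree $d_1$ of the curled pearl chain. Finally, the factor $\frac{1}{|\Aut(\mathcal{P})|}$ arises from the passage between the labeled objects counted by the Feynman integrals (where the vertices and edges of $\mathcal{P}$ carry labels, and the sum over the $n!$ orders $\Omega$ ranges over all arrangements of the labeled vertices) and the unlabeled curled pearl chains counted by $N^{\pearl}$; this overcounting by $|\Aut(\mathcal{P})|$ is handled exactly as in \cite{BBBM13,BGM18}.

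I expect the main obstacle to be the degree-matching. Concretely, one must verify that the minimum over generic fibers defining the degree $d_1$ of the leaky cover (Definition \ref{def-curledpearl}) is attained at the fiber through the cut $p_0$ and equals the total $q$-exponent accumulated in the propagator expansion, i.e.\ that for the order $\Omega$ fixed by cutting at $p_0$ the weighted number of crossings of the cut by each routed edge is exactly the $q$-exponent supplied by \ref{eq-prop}, with no other fiber meeting the cover with smaller total weight. This is a purely combinatorial wrapping count on the circle $E_\TT$, subsumed in the framework of \cite{BGM18}; once it is in place, the three identifications above assemble into the claimed equality.
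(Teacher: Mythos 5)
Your proposal is correct and follows essentially the same route as the paper: the paper likewise stratifies by the underlying pearl chain, order $\Omega$ and leaking vector $v$, invokes the general graph-cover/propagator-coefficient bijection of \cite{BGM18} for the labeled refined count $N^v_{\mathcal{P},\Omega,\underline{a}}$ (its Theorem \ref{thm-genseries=Feynmanfineversion}), and then sums over $\mathcal{P}$, $v$, $\Omega$, sets $q_k=q$, and divides by $|\Aut(\mathcal{P})|$ to pass from labeled to unlabeled objects. The sign bookkeeping ($r=2(d_2+g-1)$ even) and the matching of the degree with the weighted fiber over $p_0$ that you single out are exactly the points the paper delegates to the cited results of \cite{BGM18} and \cite{BBBM13}.
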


We call the vectors $v$ over which we sum in Theorem \ref{thm-genseries=Feynman} the suitable \emph{leaking vectors}.

\begin{remark}\label{rem-summands} Notice that the generating series on the left of the equality in Theorem \ref{thm-genseries=Feynman} can be stratified into summands for pearl chains also --- counting only those curled pearl chains $\pi:\mathcal{P}'\rightarrow E_\TT$ for a fixed pearl chain $\mathcal{P}$. The equality holds for each summand indexed by a pearl chain $\mathcal{P}$.
\end{remark}

Using Theorem \ref{them-troppearl} and, for $\Delta=\lbrace 0,\dots,0\rbrace$, the Correspondence Theorem \ref{thm-corres}, we can interpret the generating series on the left as generating series of counts of tropical stable maps to $E_\TT\times\PP^1_\TT$ of some leaky degree $\Delta$, and, for $\Delta=\lbrace 0,\dots,0\rbrace$, also as the generating series of Gromov-Witten invariants of $E\times\PP^1$. In the latter case, we can also use the complex analysis version of Feynman integral.

\begin{corollary}\label{cor-genseries=Feynman,trop}
Fix positive integers $d_2,g$ and a leaky degree $\Delta$.
The generating series of counts of connected tropical stable maps to $E_\TT\times\PP^1_\TT$ of leaky degree, genus $g$ and bidegree $(d_1,d_2)$ (see Definition \ref{def-tropGW}) equals a sum of Feynman integrals:
$$\sum_{d_1} N^{\trop}_{(\Delta,d_1,d_2,g)} q^{d_1}= \sum_{\mathcal{P}} \frac{1}{|\Aut(\mathcal{P})|}\sum_{v} \sum_{\Omega} I^v_{\mathcal{P},\Omega}(q).$$
The first sum on the right hand side goes over all pearl chains of type $(d_2,g)$ (see Definition \ref{def-pearl}), the second over all suitable leaking vectors.
\end{corollary}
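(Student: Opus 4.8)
The plan is to deduce this corollary immediately by chaining the two preceding results: the bijective count of Theorem \ref{them-troppearl} and the generating-series identity of Theorem \ref{thm-genseries=Feynman}. First I would invoke Theorem \ref{them-troppearl}, which gives, for each fixed degree $d_1$, the equality of the tropical count with the pearl-chain count,
$$N^{\trop}_{(\Delta,d_1,d_2,g)} = N^{\pearl}_{(\Delta,d_1,d_2,g)}.$$
Because this holds coefficient-by-coefficient in the formal variable $q$, it lifts directly to an identity of generating series:
$$\sum_{d_1} N^{\trop}_{(\Delta,d_1,d_2,g)} q^{d_1} = \sum_{d_1} N^{\pearl}_{(\Delta,d_1,d_2,g)} q^{d_1}.$$

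Next I would apply Theorem \ref{thm-genseries=Feynman} to the right-hand side, which rewrites the pearl-chain generating series as $\sum_{\mathcal{P}} \frac{1}{|\Aut(\mathcal{P})|}\sum_{v} \sum_{\Omega} I^v_{\mathcal{P},\Omega}(q)$, where the outer sum ranges over pearl chains $\mathcal{P}$ of type $(d_2,g)$ and the middle sum over the suitable leaking vectors $v$. Concatenating the two displays yields the asserted formula. I do not expect a genuine obstacle in this argument: all the substance has already been absorbed into the floor-diagram correspondence of Theorem \ref{them-troppearl} (the explicit bijection of Construction \ref{const-pcfromtrop}, together with the matching of multiplicities shown there) and into the combinatorial core of Theorem \ref{thm-genseries=Feynman}. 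The one point worth verifying is that the $d_1$-grading used to assemble the generating series is the same on both sides; this is exactly what Theorem \ref{them-troppearl} records, and it holds by construction, since the degree of a curled pearl chain is defined so as to equal the first entry $d_1$ of the bidegree of the associated tropical stable map.
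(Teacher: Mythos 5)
Your proposal is correct and follows exactly the paper's own argument: the paper likewise deduces the corollary by combining Theorem \ref{them-troppearl} (equality of the tropical and pearl-chain counts for each $d_1$, hence of the generating series) with Theorem \ref{thm-genseries=Feynman}. Nothing further is needed.
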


\begin{proof} This follows from Theorem \ref{thm-genseries=Feynman} and Theorem \ref{them-troppearl}, since the generating series on the left are equal.
\end{proof}

Also here, the left hand side can be stratified into sums corresponding to a fixed pearl chain, where we sum only over those tropical stable maps $(\Gamma,f)$ whose glued graph $\Gamma'$ (see Construction \ref{const-gluing}(\ref{const-gluing1})) equals $\mathcal{P}$ after shrinking floors and contracted ends and forgetting the metric.
We denote those counts by $N^{\trop,\mathcal{P}}_{(\Delta,d_1,d_2,g)}$. For each pearl chain $\mathcal{P}$, we have
$$\sum_{d_1} N^{\trop,\mathcal{P}}_{(\Delta,d_1,d_2,g)} q^{d_1}=  \frac{1}{|\Aut(\mathcal{P})|}\sum_{v} \sum_{\Omega} I^v_{\mathcal{P},\Omega}(q),$$
where the sum goes over all suitable leaking vectors.

\begin{corollary}\label{cor-genseries=Feynman,class}

Fix positive integers $d_2$ and $g$.
The generating series of Gromov-Witten invariants of $E\times\PP^1$ of genus $g$ and bidegree $(d_1,d_2)$ (see Definition \ref{def-GW}) equals a sum of Feynman integrals:
$$\sum_{d_1} N{(d_1,d_2,g)} q^{d_1}= \sum_{\mathcal{P}} \frac{1}{|\Aut(\mathcal{P})|}\sum_{\Omega} I'_{\mathcal{P},\Omega}(q).$$
The first sum on the right hand side goes over all pearl chains of type $(d_2,g)$ (see Definition \ref{def-pearl}.
\end{corollary}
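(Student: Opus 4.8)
The plan is to obtain this statement as the $\Delta=\lbrace 0,\dots,0\rbrace$ specialization of Corollary \ref{cor-genseries=Feynman,trop}, translated from tropical counts to Gromov--Witten invariants via the Correspondence Theorem and from formal series to complex-analytic path integrals via Theorem \ref{thm-Feynmanversions}. Concretely, I would first invoke the connected version of Theorem \ref{thm-corres}, which gives $N_{(d_1,d_2,g)}=N^{\trop}_{(d_1,d_2,g)}=N^{\trop}_{(\lbrace 0,\dots,0\rbrace,d_1,d_2,g)}$ for every $d_1$, so that the generating series $\sum_{d_1}N_{(d_1,d_2,g)}q^{d_1}$ and $\sum_{d_1}N^{\trop}_{(\lbrace 0,\dots,0\rbrace,d_1,d_2,g)}q^{d_1}$ coincide as formal power series.

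Next I would apply Corollary \ref{cor-genseries=Feynman,trop} with leaky degree $\Delta=\lbrace 0,\dots,0\rbrace$, which rewrites $\sum_{d_1}N^{\trop}_{(\lbrace 0,\dots,0\rbrace,d_1,d_2,g)}q^{d_1}$ as $\sum_{\mathcal{P}}\frac{1}{|\Aut(\mathcal{P})|}\sum_{v}\sum_{\Omega}I^{v}_{\mathcal{P},\Omega}(q)$, where $v$ ranges over the suitable leaking vectors. The key simplification is that when every $L_i=0$ there is a \emph{unique} suitable leaking vector, namely the one assigning $0$ to each white vertex (this is forced, since all the leakings are $0$) and $0$ to each black vertex. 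Hence the inner sum over $v$ collapses to the single summand $v=(0,\dots,0)$, and by the convention of Definition \ref{def-Feynman} (an all-zero superscript is dropped) this summand is exactly $I_{\mathcal{P},\Omega}(q)$. Thus the right-hand side becomes $\sum_{\mathcal{P}}\frac{1}{|\Aut(\mathcal{P})|}\sum_{\Omega}I_{\mathcal{P},\Omega}(q)$.

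Finally, Theorem \ref{thm-Feynmanversions} provides the identification $I_{\mathcal{P},\Omega}(q)=I'_{\mathcal{P},\Omega}(q)$ for each pearl chain $\mathcal{P}$ and each order $\Omega$, where $I'_{\mathcal{P},\Omega}$ is the complex-analytic path integral of Definition \ref{def-int}; substituting this term by term yields the asserted formula $\sum_{d_1}N_{(d_1,d_2,g)}q^{d_1}=\sum_{\mathcal{P}}\frac{1}{|\Aut(\mathcal{P})|}\sum_{\Omega}I'_{\mathcal{P},\Omega}(q)$. I do not expect a genuine obstacle, since every ingredient is already in place; the only point requiring care is the collapse of the leaking-vector sum to a single term, which rests on the observation that the multiset $\Delta=\lbrace 0,\dots,0\rbrace$ admits only the zero assignment to the white vertices, together with matching the superscript conventions so that $I^{(0,\dots,0)}_{\mathcal{P},\Omega}=I_{\mathcal{P},\Omega}$.
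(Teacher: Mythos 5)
Your proposal is correct and follows exactly the paper's own argument: the paper derives this corollary from Corollary \ref{cor-genseries=Feynman,trop}, the Correspondence Theorem \ref{thm-corres}, and Theorem \ref{thm-Feynmanversions}, noting precisely as you do that the sum over suitable leaking vectors becomes trivial for $\Delta=\lbrace 0,\dots,0\rbrace$. Your write-up is in fact more detailed than the paper's one-sentence proof, and the point you flag as requiring care (the collapse of the leaking-vector sum to the single term $v=(0,\dots,0)$ and the superscript convention $I^{(0,\dots,0)}_{\mathcal{P},\Omega}=I_{\mathcal{P},\Omega}$) is exactly the observation the paper makes.
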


\begin{proof}
This follows from Corollary \ref{cor-genseries=Feynman,trop}, the Correspondence Theorem \ref{thm-corres} and Theorem \ref{thm-Feynmanversions}, taking into account that the sum over all suitable leaking vectors $v$ in Corollary \ref{cor-genseries=Feynman,trop} becomes trivial for $\Delta=\lbrace 0,\dots,0 \rbrace$.
\end{proof}

Using tropical geometry, we can stratify the generating series on the left again into summands corresponding to a pearl chain: close to the tropical limit, we can take those stable maps which degenerate to a tropical stable map with a fixed underlying pearl chain. We denote these numbers by $N^{\mathcal{P}}{(d_1,d_2,g)}$, then we have
\begin{equation}\sum_{d_1} N^{\mathcal{P}}{(d_1,d_2,g)} q^{d_1}=  \frac{1}{|\Aut(\mathcal{P})|}\sum_{\Omega} I'_{\mathcal{P},\Omega}(q).\label{eq-summand}\end{equation}

\subsection{Labeled curled pearl chains and the proof of Theorem \ref{thm-genseries=Feynman}}

To prove Theorem \ref{thm-genseries=Feynman}, we need to introduce graph covers and labeled curled pearl chains:

\begin{definition}[Graph covers, see Definition 2.18 \cite{BGM18}] \label{def-graphcover}
Let $\Gamma$ be a graph with $n$ labeled vertices $x_1,\ldots,x_n$ and $r$ labeled edges $q_1,\ldots,q_r$. Let $\Omega$ be an order, viewed as an element in the symmetric group on $n$ elements, associating to $i$ the place $\Omega(i)$ that the vertex $x_i$ takes in the order $\Omega$. 
As before, fix an orientation of $E_{\TT}$ and points $p_0,p_1,\ldots,p_n$ ordered in this way when going around $E_{\TT}$ starting at $p_0$.

A \emph{graph cover} of type $\Gamma$ and order $\Omega$ is a leaky tropical cover $\pi:\Gamma'\rightarrow E_{\TT}$, where $\Gamma'$ is a metrization of $\Gamma$, such that $\pi^{-1}(p_{\Omega(i)})$ contains the vertex $x_i$. (Since there are $n$ point conditions and $n$ vertices, it follows that there is precisely one vertex of $\Gamma$ in each preimage $\pi^{-1}(p_{j})$).)

The \emph{multidegree} of a graph cover is the vector $\underline{a}\in \mathbb{N}^r$ with $k$-th entry $$a_k=|\pi^{-1}(p_0)\cap q_k|\cdot w_k,$$ where $w_k$ denotes the expansion factor of the edge $q_k$. 
 \end{definition}

\begin{definition}\label{def-labeledpearlchain}
A \emph{labeled curled pearl chain} $\pi:\mathcal{P}'\rightarrow E_\TT$ is a curled pearl chain (Definition \ref{def-curledpearl}) for which we fix labels $x_1,\ldots,x_n$ for the vertices of $\mathcal{P}$ and $q_1,\ldots,q_r$ for the edges.
\end{definition}

\begin{remark} By definition, a labeled curled pearl chain is a graph cover with a particular source graph, namely a pearl chain, and for which only suitable leaking vectors $v$ are allowed.
\end{remark}

\begin{definition}
Let $\mathcal{P}$ be a pearl chain of type $(d_2,g)$, $\Omega$ and order and $\underline{a}$ a multidegree. Let $v$ be a suitable leaking vector for $\Delta$.

We define $N^v_{\mathcal{P},\Omega,\underline{a}}$ to be the \emph{number of labeled curled pearl chains} whose source curve is a metrization of $\mathcal{P}$, of order $\Omega$ and multidegree $\underline{a}$, where the leaking is imposed by $v$. Each labeled curled pearl chain is counted with the product of the expansion factors $w_e$ of the edges $e$ of $\mathcal{P}$.
\end{definition}

Notice that $N^v_{\mathcal{P},\Omega,\underline{a}}$ is a count of graph covers with fixed order and multidegree as in Definition 2.18 \cite{BGM18}, for which we chose a particular source graph (namely a pearl chain) and a suitable leaking vector.

\begin{theorem}\label{thm-genseries=Feynmanfineversion}
Fix positive integers $d_2,g$ and a leaky degree $\Delta$. Fix a labeled pearl chain $\mathcal{P}$, an oder $\Omega$ and a multidegree $\underline{a}$.
Let $v$ be a suitable leaking vector.

The numbers $N^v_{\mathcal{P},\Omega,\underline{a}}$ of labeled curled pearl chains of Definition \ref{def-labeledpearlchain} are coefficients of a refined Feynman integral:
$$N^v_{\mathcal{P},\Omega,\underline{a}}= \Coef_{[q_1^{a_1}\ldots q_r^{a_r}]} I^v_{\mathcal{P},\Omega}(q_1,\ldots,q_r).$$
\end{theorem}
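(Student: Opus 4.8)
The plan is to deduce the statement by specializing the general graph-cover bijection of \cite{BGM18} (Theorem 2.23). As recorded in the remark following Definition \ref{def-labeledpearlchain}, a labeled curled pearl chain is exactly a graph cover in the sense of Definition \ref{def-graphcover} whose source graph is the pearl chain $\mathcal{P}$ and whose leaking is prescribed by the suitable vector $v$. Hence $N^v_{\mathcal{P},\Omega,\underline{a}}$ is precisely the count of graph covers of source $\mathcal{P}$, order $\Omega$ and multidegree $\underline{a}$, weighted by $\prod_e w_e$, and the task reduces to matching the discrete data and the weights on both sides.

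First I would unwind the right-hand side by expanding each propagator via Equation (\ref{eq-prop}) and extracting the coefficient of $q_k^{a_k}$ from the $k$-th factor $P(x_{k^1}/x_{k^2},q_k)$. For $a_k\geq 1$ this produces $-\sum_{d\mid a_k} d\,\bigl((x_{k^1}/x_{k^2})^{d}+(x_{k^1}/x_{k^2})^{-d}\bigr)$, and for $a_k=0$ it produces $-\sum_{d\geq 1} d\,(x_{k^1}/x_{k^2})^{d}$. A choice of a divisor $d=w_k$ (together with a sign when $a_k\geq 1$) records the expansion factor of the edge $q_k$, the number $a_k/w_k$ of times it crosses $p_0$ (which is the multidegree of Definition \ref{def-graphcover}), and its orientation relative to $E_\TT$; the corresponding monomial contributes $x_{k^1}^{\pm w_k}x_{k^2}^{\mp w_k}$. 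Reading the total exponent of $x_i$ as the signed sum of expansion factors of the edges incident to $x_i$, the extraction $\Coef_{[x_1^{l_1}\cdots x_n^{l_n}]}$ imposes exactly the leaking $l_i$ at each vertex, that is, the leaking vector $v$. This is the heart of the correspondence and precisely the mechanism underlying \cite{BGM18}, Theorem 2.23.

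Next I would make the bijection explicit: a monomial surviving $\Coef_{[q_1^{a_1}\cdots q_r^{a_r}]}\Coef_{[x_1^{l_1}\cdots x_n^{l_n}]}\prod_k P$ assigns to each edge an expansion factor, a number of crossings and a direction, hence a leaky tropical cover $\pi\colon\mathcal{P}'\to E_\TT$, and conversely such a cover selects one monomial from each factor. Here I would check that the order $\Omega$ and the prescribed images $p_{\Omega(i)}$ of the vertices determine the metrization $\mathcal{P}'$ uniquely, so that the combinatorial data corresponds to a single labeled curled pearl chain, and that the weight $\prod_k w_k$ of the cover equals the product $\prod_k d_k$ of the chosen divisors. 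The one genuinely delicate point is the overall sign: each of the $r$ factors of $P$ contributes $-1$, but for a pearl chain of type $(d_2,g)$ the first Betti number forces $r=g+n-1=2(d_2+g-1)$ to be even, so $(-1)^r=1$ and the signs cancel, leaving the positive count $\prod_e w_e$.

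The hard part will be verifying that the leaking bookkeeping through the $x$-exponents is both consistent and exhaustive: that every assignment of expansion factors and directions meeting the vertex constraints $v$ arises from exactly one surviving monomial, with no spurious monomials. This completeness is exactly what \cite{BGM18}, Theorem 2.23 establishes in full generality, so once labeled curled pearl chains are identified with graph covers of source $\mathcal{P}$ and suitable leaking $v$, the result follows from that theorem, with the parity observation above accounting for the sign.
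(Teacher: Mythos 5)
Your proposal is correct and follows essentially the same route as the paper, whose entire proof is to identify labeled curled pearl chains with graph covers having a pearl chain as source and to invoke the general bijection of \cite{BGM18} (the paper cites Theorem 2.19 there). The additional detail you supply --- the propagator expansion, the matching of multidegree and leaking data, and the sign cancellation via the evenness of $r=2(d_2+g-1)$ --- is exactly the mechanism that the cited theorem encapsulates, so it is a faithful unpacking rather than a different argument.
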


\begin{proof}
This follows from Theorem 2.19 \cite{BGM18}.
\end{proof}

The following is an immediate corollary of Theorem \ref{thm-genseries=Feynmanfineversion}:
\begin{corollary}\label{cor-fineseries}
Fix positive integers $d_2,g$ and a leaky degree $\Delta$. Fix a labeled pearl chain $\mathcal{P}$, an oder $\Omega$ and a multidegree $\underline{a}$.
Let $v$ be a suitable leaking vector.

The generating series of the numbers of labeled curled pearl chains (see Definition \ref{def-labeledpearlchain}) equals a refined Feynman integral:

$$\sum_{\underline{a}\in\NN^r} N^v_{\mathcal{P},\Omega,\underline{a}}
q_1^{a_1}\cdot \ldots\cdot q_r^{a_r} =  I^v_{\mathcal{P},\Omega}(q_1,\ldots,q_r).$$
\end{corollary}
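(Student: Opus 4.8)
The plan is to treat this as a purely formal manipulation of power series, since all the enumerative content has already been established in Theorem \ref{thm-genseries=Feynmanfineversion}. First I would observe that the refined Feynman integral $I^v_{\mathcal{P},\Omega}(q_1,\ldots,q_r)$ is a well-defined element of the formal power series ring $\mathbb{Q}[[q_1,\ldots,q_r]]$. Indeed, by Definition \ref{def-Feynman} it is obtained from the product $\prod_{k=1}^r P(\frac{x_{k^1}}{x_{k^2}},q_k)$ by extracting a fixed coefficient in the $x$-variables (the one prescribed by the leaking vector $v$), and each propagator $P(x,q_k)$ from Equation (\ref{eq-prop}) is a power series in $q_k$ with only nonnegative exponents. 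Hence the $x$-coefficient extraction and the formation of the series in the $q$-variables commute, so $I^v_{\mathcal{P},\Omega}(q_1,\ldots,q_r)$ admits a genuine monomial expansion indexed by $\underline{a}\in\NN^r$, with each coefficient a well-defined finite number.

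Second, I would invoke the tautological expansion of a formal power series: any $F\in\mathbb{Q}[[q_1,\ldots,q_r]]$ satisfies $F=\sum_{\underline{a}\in\NN^r}\Coef_{[q_1^{a_1}\ldots q_r^{a_r}]}(F)\cdot q_1^{a_1}\cdots q_r^{a_r}$. Applying this identity to $F=I^v_{\mathcal{P},\Omega}(q_1,\ldots,q_r)$ and then substituting, term by term, the coefficient-wise equality $\Coef_{[q_1^{a_1}\ldots q_r^{a_r}]}I^v_{\mathcal{P},\Omega}(q_1,\ldots,q_r)=N^v_{\mathcal{P},\Omega,\underline{a}}$ of Theorem \ref{thm-genseries=Feynmanfineversion} yields exactly the claimed identity $\sum_{\underline{a}\in\NN^r}N^v_{\mathcal{P},\Omega,\underline{a}}\,q_1^{a_1}\cdots q_r^{a_r}=I^v_{\mathcal{P},\Omega}(q_1,\ldots,q_r)$ of generating functions.

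There is essentially no obstacle here: the only things to verify are that both sides are being compared inside the same ring $\mathbb{Q}[[q_1,\ldots,q_r]]$ and that the coefficient-wise statement of Theorem \ref{thm-genseries=Feynmanfineversion} may be summed over all $\underline{a}\in\NN^r$ without any convergence subtlety, which is automatic for formal power series supported on nonnegative multi-exponents. Thus the hard part is entirely contained in Theorem \ref{thm-genseries=Feynmanfineversion} (hence in Theorem 2.19 of \cite{BGM18}), and the corollary merely re-packages that coefficient-by-coefficient equality as a single identity of generating series.
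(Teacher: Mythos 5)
Your proposal is correct and matches the paper, which gives no separate argument and simply calls the statement ``an immediate corollary'' of Theorem \ref{thm-genseries=Feynmanfineversion}: one sums the coefficient-wise identity over all multidegrees $\underline{a}\in\NN^r$ inside $\mathbb{Q}[[q_1,\ldots,q_r]]$. Your additional remarks on well-definedness of the coefficient extraction are harmless and consistent with the paper's setup.
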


\begin{proof}[Proof of Theorem \ref{thm-genseries=Feynman}]
The equality of the generating series and the sum of Feynman integrals now follows from Corollary \ref{cor-fineseries} by summing over all pearl chains $\mathcal{P}$ of type $(d_2,g)$ and leaking $\Delta$, and over all suitable leaking vectors $v$, setting the $q_k$ equal to $q$ again for all $k$ and keeping track of  automorphisms (as in the proof of Theorem 2.14 using Theorem 2.20 in \cite{BBBM13}). 
\end{proof}

\begin{example}\label{ex-pearl_chain}
Let $\underline{a}=(1,0,0,1)$ be a multidegree and let $v$ be zero, i.e. there is no leaking. Fix points $p_0,\dots,p_4$ on $E_\TT$ and let $\Omega$ be the order associated to the identity in the permutation group of $4$ elements. We want to determine $N^v_{\mathcal{P},\Omega,\underline{a}}$, where $\mathcal{P}$ and its labels are depicted on the left in the figure below. Using Figure \ref{fig-Pic7_ink} from Example \ref{ex-countingpcs}, we can see that the only labeled curled pearl chain of multidegree $(1,0,0,1)$ and order $\Omega$ is the one shown on the right below. The gray dots indicate the preimages of $p_0\in E_\TT$. Since this labeled curled pearl chain has multiplicity $1$, we have $N^v_{\mathcal{P},\Omega,\underline{a}}=1$.

\begin{figure}[H]
\centering
\def\svgwidth{330pt}
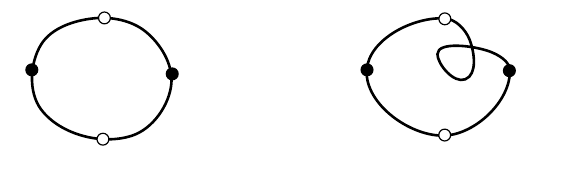
\label{fig-Pic9_ink}
\end{figure}

\noindent Using Theorem \ref{thm-genseries=Feynmanfineversion}, we can verify this count.
\end{example}

\begin{example}
We want to provide a lager example illustrating Theorem \ref{thm-genseries=Feynmanfineversion}. First of all, fix the following data: we consider no leaking, the pearl chain $\mathcal{P}$ (which is of genus $2$)  is shown below (see Figure \ref{fig-Pic10_ink}), the order $\Omega$ corresponds to the identical permutation and the total degree $d$ should be $3$. First, we want to determine the left hand side of Theorem \ref{thm-genseries=Feynmanfineversion} for the given input data, i.e. $\sum_{\underline{a}}N_{\mathcal{P},\Omega,\underline{a}}$, where the sum goes over all $\underline{a}$ contributing to a total degree of $3$. After that, we calculate the refined Feynman integral on the right hand side and compare its coefficients to the combinatorial count of labeled pearl chains from before.

\begin{figure}[H]
\centering
\def\svgwidth{280pt}
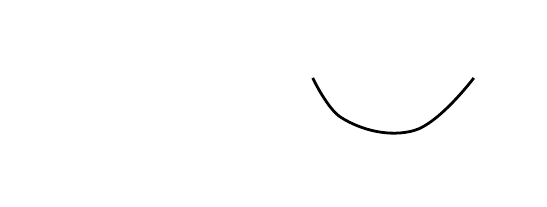
\caption{Left: a pearl chain $\mathcal{P}$. Right: $\mathcal{P}$ with labels.}
\label{fig-Pic10_ink}
\end{figure}

Since we fixed an order $\Omega$, we should label the pearl chain's vertices and edges (see Figure \ref{fig-Pic10_ink}) before curling them. In Figure \ref{fig-Pic11_ink} we determine $\sum_{\underline{a}}N_{\mathcal{P},\Omega,\underline{a}}$. Figure \ref{fig-Pic11_ink} shows schematic representations of labeled curled pearl chains: we suppress the elliptic curve $E_\TT$ and only show the source curve of each cover. The labels of the source curve are also suppressed, but can easily be seen from comparing Figure \ref{fig-Pic11_ink} to Figure \ref{fig-Pic10_ink}. Each curled pearl chain contributes with the product of its source curve's edge expansion factors (the edge expansion factors not equal to $1$ are shown in Figure \ref{fig-Pic11_ink}), thus $\sum_{\underline{a}}N_{\mathcal{P},\Omega,\underline{a}}=96$.

\begin{figure}[]
\centering
\def\svgwidth{380pt}
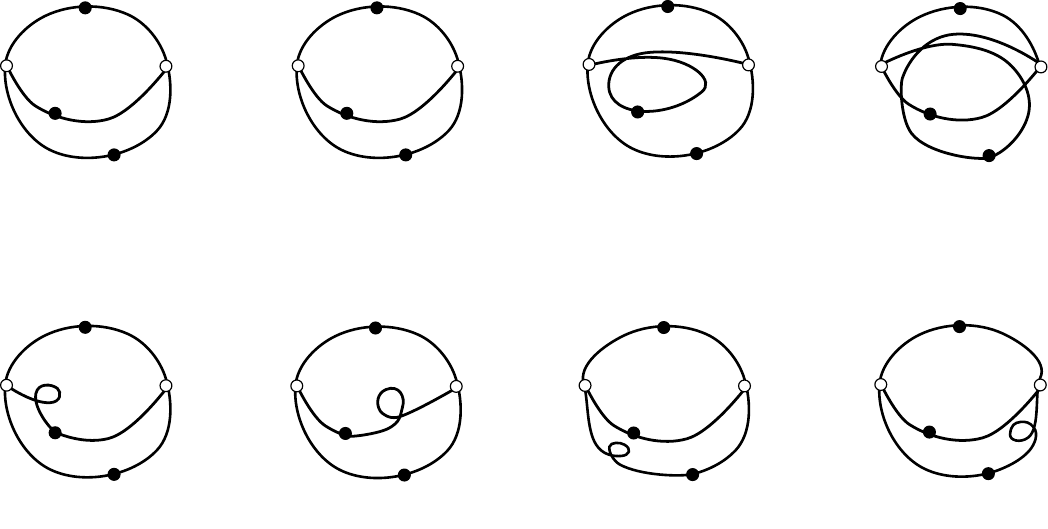
\caption{Curled pearl chains and their contributions to the refined Feynman integral.}
\label{fig-Pic11_ink}
\end{figure}

The sum of the refined Feynman integral's terms that are of degree $3$ is $40q_4^2q_5+40q_4q_5^2+4q_4q_5q_6+4q_3q_4q_5+4q_3q^2_5+4q^2_4q_6$, which makes $96q$ if we choose $q_1=\dots=q_4=q$, i.e. we obtained exactly the coefficient which we expected from our combinatorial count. The contributions of each term can directly be seen from our curled pearl chains. We wrote these contributions under each curled pearl chain in Figure \ref{fig-Pic11_ink}.

\end{example}

\section{Quasimodularity}\label{sec-quasimod}
Following the study of quasimodularity for generating series of Gromov-Witten invariants of an elliptic curve, Oberdieck and Pixton conjecture a quasimodularity statement for cycle-valued generating series of Gromov-Witten invariants of elliptic fibrations (Conjecture A \cite{OP17}), which they prove for the case of products $E\times B$ (Corollary 2). Quasimodularity behavior is useful because the asymptotics of the generating function becomes controllable. Quasimodularity of generating series of covers is a phenomenon studied beyond the case mentioned before, other important cases are generating functions of pillowcase covers \cite{EO06} or generating functions of numbers of covers of an elliptic curve with fixed ramifications with respect to the parity of the pullback of the trivial theta characteristic \cite{EOP08}.

 A series in $q$ is \emph{quasimodular} if and only if it is in the polynomial ring generated by the three \emph{Eisenstein series} $E_2$, $E_4$ and $E_6$ \cite{KZ95}. The \emph{weight} of a quasimodular form refers to its degree when viewed as a polynomial in the Eisenstein series. A series is called a quasimodular form of weight $w$ if it is a homogeneous polynomial of degree $w$ in the Eisenstein series, and it is called a quasimodular form of mixed weight if it is a non-homogeneous polynomial in the Eisenstein series.

Our Equation (\ref{eq-mainresult}) (see also Theorem \ref{thm-genseries=Feynman}, Corollary \ref{cor-genseries=Feynman,trop} and Corollary \ref{cor-genseries=Feynman,class}) hands us a new way to study quasimodularity by making use of Feynman integrals: the quasimodularity (of mixed weight) of a summand $I_{\mathcal{P}}$ for a fixed graph $\mathcal{P}$ can be deduced from the recent study of quasimodularity for graph sums in \cite{GM16}.

When passing to the tropical limit, the summands $I_{\mathcal{P}}$ obtain a meaning as summands of the generating series corresponding to a fixed pearl chain:

\begin{remark}\label{rem-seriesforonepc}
Fix positive integers $d_2,g$, and a pearl chain $\mathcal{P}$ of type $(d_2,g)$.

Then the following generating series coincide:
\begin{itemize}
\item the generating series of Gromov-Witten invariants of $E\times\PP^1$ of bidegree $(d_1,d_2)$ and genus $g$ which correspond to a fixed pearl chain $\mathcal{P}$ close to the tropical limit,
\item the generating series of tropical stable maps to $E_\TT\times \PP^1_\TT$ of bidegree $(d_1,d_2)$ and genus $g$ with fixed underlying pearl chain $\mathcal{P}$,
\item the generating series of curled pearl chains of type $(d_2,g)$ without leaking with fixed underlying pearl chain $\mathcal{P}$:
\end{itemize}

\begin{equation}
\sum_{d_1} N^{\mathcal{P}}_{(d_1,d_2,g)} q^{d_1}=  \sum_{d_1} N^{\trop,\mathcal{P}}_{(d_1,d_2,g)} q^{d_1} = \sum_{d_1} N^{\pearl,\mathcal{P}}_{(d_1,d_2,g)} q^{d_1}
\label{eq-equalseries}
\end{equation}

By Section \ref{sec-Feynman}, the series in Equation (\ref{eq-equalseries}) equal a sum of Feynman integrals
$$\frac{1}{|\Aut(\mathcal{P})|}\sum_\Omega I'_{\mathcal{P},\Omega}(q)=  \frac{1}{|\Aut(\mathcal{P})|}\sum_\Omega I_{\mathcal{P},\Omega}(q).$$
\end{remark}

\begin{theorem}\label{thm-quasimod}
Fix positive integers $d_2,g$ and a pearl chain $\mathcal{P}$ of type $(d_2,g)$. Each summand $I_{\mathcal{P},\Omega}(q)$ corresponding to an order $\Omega$ in the generating series (\ref{eq-equalseries}) is a quasimodular forms of mixed weight at most $4(d_2+g-1)$.
\end{theorem}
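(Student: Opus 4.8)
The plan is to recognize the summand $I_{\mathcal{P},\Omega}(q)$ as a graph sum of exactly the type treated in \cite{GM16} and to read off both quasimodularity and the weight bound from the edge count of $\mathcal{P}$.

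First I would fix the combinatorics. A pearl chain $\mathcal{P}$ of type $(d_2,g)$ (Definition \ref{def-pearl}) is connected with $n=2d_2+g-1$ vertices and first Betti number $g$, hence it has $r=n-1+g=2(d_2+g-1)$ edges. Thus the product $\prod_{k=1}^{r}P(\tfrac{x_{k^1}}{x_{k^2}},q)$ in Definition \ref{def-Feynman} has precisely $r=2(d_2+g-1)$ propagator factors, one for each edge of $\mathcal{P}$.

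Next I would identify $I_{\mathcal{P},\Omega}(q)=\Coef_{[x_1^{0}\cdots x_n^{0}]}\prod_{k}P(\tfrac{x_{k^1}}{x_{k^2}},q)$ with a graph sum in the sense of \cite{GM16}. In this dictionary the order $\Omega$ records, for each edge, which endpoint carries the numerator variable; extracting the constant coefficient in $x_1,\dots,x_n$ imposes the balancing (momentum conservation) at every vertex; and each edge is weighted by the propagator $P$, which by Definition \ref{def-prop} and Equation (\ref{eq-prop}) is assembled from the Weierstra\ss{}-$\wp$-function and the Eisenstein series $E_2$. With this matching in place, the quasimodularity theorem of \cite{GM16} applies directly and shows that $I_{\mathcal{P},\Omega}(q)$ is a quasimodular form of mixed weight.

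It then remains to track the weight. Both $\wp(\cdot,q)$ and $E_2(q^2)$ are (quasi)modular of weight two, so every propagator $P$ carries weight two, and the constant-coefficient extraction---equivalently, via Theorem \ref{thm-Feynmanversions}, the path integral of Definition \ref{def-int}---does not raise the weight. Consequently the top weight occurring in $I_{\mathcal{P},\Omega}(q)$ is at most $2r=4(d_2+g-1)$. The form is genuinely of \emph{mixed} weight rather than homogeneous, because $E_2$ is only quasimodular and the convolutions produced by the constant-coefficient extraction mix in strictly lower-weight contributions; this is precisely why we only claim weight \emph{at most} $4(d_2+g-1)$. The main obstacle is the bookkeeping in the middle step: one must verify that our normalizations---the coordinate change $x=e^{i\pi z}$, the exact shape of the propagator, and the encoding of the order $\Omega$---match the hypotheses of \cite{GM16} on the nose, and that the weight grading employed there indeed assigns weight two to each propagator, so that the resulting bound $2r$ evaluates to $4(d_2+g-1)$.
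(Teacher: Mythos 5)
Your proposal is correct and follows essentially the same route as the paper: both reduce the statement to the graph-sum quasimodularity results of \cite{GM16} (Theorem 1.1 resp.\ 6.1 there), with each propagator contributing weight two so that the bound is $2r$ with $r$ the number of edges. The only cosmetic difference is that you compute $r=2(d_2+g-1)$ via the Euler characteristic of the connected genus-$g$ graph, whereas the paper counts edges through the $2$-valent black vertices; both are valid.
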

This follows from Theorem 1.1 resp.\ Theorem 6.1 in \cite{GM16}. These results imply that each summand $I_{\mathcal{P},\Omega}(q)$ corresponding to an order $\Omega$ is a quasimodular form of mixed weight at most $2r$, where $r$ is the number of edges of $\mathcal{P}$. Note that $r$ equals two times the number of back vertices of $\mathcal{P}$ (which is by definition $d_2+g-1$) because black vertices are $2$-valent and no black vertex is adjacent to a black vertex.

As a consequence, the generating series which is the sum of the ones in Equation (\ref{eq-equalseries}), summing over all pearl chains $\mathcal{P}$, is a quasimodular form:

\begin{corollary}\label{cor-quasimodular}
Let $d_2,g$ be positive integers.
Then the generating series of Gromov-Witten invariants of $E\times\PP^1$ (resp.\ of tropical stable maps to $E_\TT\times \PP^1_\TT$ of bidegree $(d_1,d_2)$ and genus $g$, resp.\ of curled pearl chains of type $(d_2,g)$) 

$$ \sum_{d_1} N_{(d_1,d_2,g)} q^{d_1}=\sum_{d_1} N^{\trop}_{(d_1,d_2,g)} q^{d_1}=\sum_{d_1} N^{\pearl}_{(d_1,d_2,g)} q^{d_1} $$
is a quasimodular form of mixed weight at most $4(d_2+g-1)$.
\end{corollary}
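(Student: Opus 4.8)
The plan is to deduce this directly from Corollary \ref{cor-genseries=Feynman,class} together with Theorem \ref{thm-quasimod}. First I would invoke Corollary \ref{cor-genseries=Feynman,class} to express the generating series as the finite sum
\[
\sum_{d_1} N_{(d_1,d_2,g)}\,q^{d_1} = \sum_{\mathcal{P}} \frac{1}{|\Aut(\mathcal{P})|}\sum_{\Omega} I'_{\mathcal{P},\Omega}(q),
\]
where $\mathcal{P}$ ranges over pearl chains of type $(d_2,g)$ and $\Omega$ over the $n!$ orders of the $n=2d_2+g-1$ vertices. As noted in the introduction, for fixed $d_2$ and $g$ there are only finitely many pearl chains of type $(d_2,g)$, so the outer sum is finite, and the inner sum over orders is visibly finite as well. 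By Theorem \ref{thm-Feynmanversions} I may freely replace each complex-analytic integral $I'_{\mathcal{P},\Omega}(q)$ by the constant-coefficient version $I_{\mathcal{P},\Omega}(q)$.

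Next I would apply Theorem \ref{thm-quasimod}, which asserts that each individual summand $I_{\mathcal{P},\Omega}(q)$ is a quasimodular form of mixed weight at most $4(d_2+g-1)$. Since a series is quasimodular of mixed weight at most $w$ precisely when it lies in the $\mathbb{Q}$-subspace of $\mathbb{Q}[E_2,E_4,E_6]$ spanned by the monomials of degree at most $w$, and this subspace is closed under $\mathbb{Q}$-linear combinations, the finite sum above --- weighted by the rational factors $1/|\Aut(\mathcal{P})|$ --- again lies in this subspace. Hence the generating series is quasimodular of mixed weight at most $4(d_2+g-1)$. The three generating series in the statement coincide by Theorem \ref{them-troppearl} and the Correspondence Theorem \ref{thm-corres}, so the conclusion applies to all of them simultaneously.

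The point I would emphasise is that the entire arithmetic content has already been absorbed into Theorem \ref{thm-quasimod} (and through it into \cite{GM16}); the only genuine subtlety in the present step is that the weight bound survives summation. This works because we track the \emph{mixed} (non-homogeneous) weight: pure weight is not additive unless the weights of the summands agree, but the bound ``mixed weight at most $w$'' is manifestly preserved by addition and by scaling by rational numbers. Consequently the corollary is a formal assembly rather than a computation, and I would expect no serious obstacle beyond recording the finiteness of the indexing set and the closure of the ambient space under linear combinations.
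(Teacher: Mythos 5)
Your proposal is correct and follows essentially the same route as the paper: decompose the generating series via the Feynman-integral formula (Corollary \ref{cor-genseries=Feynman,class}), apply the per-summand quasimodularity with weight bound $4(d_2+g-1)$ from Theorem \ref{thm-quasimod}, and conclude by finiteness of the index set and closure of mixed-weight-bounded quasimodular forms under rational linear combinations. Your write-up is in fact more explicit than the paper's two-sentence proof about why the mixed-weight bound survives summation, but the underlying argument is identical.
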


\begin{proof} This follows from the quasimodularity of each summand corresponding to a pearl chain $\mathcal{P}$ and an order $\Omega$ as considered in Theorem \ref{thm-quasimod} above. The maximal weight arises because a pearl chain of type $(d_2,g)$ has $2(d_2+g-1)$ edges, where $d_2+g-1$ is the number of black vertices.
\end{proof}

In the following, we give an example illustrating Corollary \ref{cor-quasimodular} which states that the series $\sum_{d_1} N^{\pearl}_{(d_1,d_2,g)} q^{d_1}$ is a quasimodular form. Recall that $\sum_{d_1} N^{\pearl}_{(d_1,d_2,g)} q^{d_1}$ is stratified as a sum over orders $\Omega$. 
We observe that for some input data each of these strata contributes with a summand that is a quasimodular form of mixed weight, but the sum over all orders yields a homogeneous quasimodular form.

This is in accordance with the situation of generating series of Hurwitz numbers, where each summand is quasimodular of mixed weight, but the whole sum is homogeneous \cite{GM16}.

\begin{example}\label{ex-homogeneous_forms}

We choose $d_2=2$ and $g=1$. Hence there is only one pearl chain $\mathcal{P}$ contributing to $\sum_{d_1} N^{\pearl}_{(d_1,d_2,g)} q^{d_1}$, namely the one shown in Example \ref{ex-pearl_chain}. Using Corollary~\ref{cor-fineseries}, we can calculate the generating series for our pearl chain $\mathcal{P}$ and any order $\Omega$ in terms of refined Feynman integrals. We observe that there are $8$ orders $\Omega$ (the elements of the symmetry group $D_4=\{ (), (1,2,3,4), (1,3)(2,4), (1,4,3,2), (2,4), (1,3), (1,2)(3,4),  (1,4)(2,3) \}$ of the square) which all lead to the same generating series
\begin{align*}
g_1:=\dots & 15092q^{11}+13560q^{10}+7701q^9+5680q^8+2520q^7+1872q^6+670q^5\\
	&+344q^4+92q^3+20q^2+q,
\end{align*}
which is a quasimodular form of mixed weight since it can be expressed in Eisenstein series as
\begin{align*}
g_1= &-\frac{1}{1080}E_6(q)+\frac{1}{1080}E_2(q)E_4(q)+\frac{1}{6912}E^2_4(q)-\frac{1}{2592}E_2(q)E_6(q)\\
	&+\frac{1}{3456}E^2_2(q)E_4(q)-\frac{1}{20736}E^4_2(q).
\end{align*}
Moreover, we observe that the remaining $16$ orders all lead to the same generating series
\begin{align*}
g_2:=\dots & 440q^{11}+2220q^{10}+888q^9+1000q^8+112q^7+360q^6+40q^5+52q^4\\ 
	&+8q^3+2q^2,
\end{align*}
which is also a quasimodular form of mixed weight since it can be expressed in Eisenstein series as
\begin{align*}
g_2= & \frac{1}{2160}E_6(q)-\frac{1}{2160}E_2(q)E_4(q)+\frac{1}{6912}E^2_4(q)-\frac{1}{2592}E_2(q)E_6(q)\\
	&+\frac{1}{3456}E^2_2(q)E_4(q)-\frac{1}{20736}E^4_2(q).
\end{align*}
Taking the sum, we obtain that the generating series
\begin{align*}
\sum_{d_1} N^{\pearl}_{(d_1,2,1)} q^{d_1} &=8g_1+16g_2 \\
&= \frac{1}{288}E^2_4(q)-\frac{1}{108}E_2(q)E_6(q)+\frac{1}{144}E^2_2(q)E_4(q)-\frac{1}{864}E^4_2(q)
\end{align*}
which is a homogeneous quasimodular form of weight $8$.
\end{example}

The observations from Example \ref{ex-homogeneous_forms} are true in general, which follows from the study of Oberdieck and Pixton (Theorem 7, Appendix A, \cite{OP17}):

\begin{theorem}\label{thm-quasimod homo}
Fix positive integers $d_2,g$ and a pearl chain $\mathcal{P}$ of type $(d_2,g)$. Let $r$ be the number of edges of $\mathcal{P}$. Then the generating series $\sum_{d_1} N^{\trop,\mathcal{P}}_{(d_1,d_2,g)} q^{d_1}$ obtained for a fixed pearl chain as the sum of the contributions over all orders is a homogeneous quasimodular form of weight $4(d_2+g-1)$.
\end{theorem}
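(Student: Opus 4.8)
The plan is to recognize the fixed-pearl-chain series as one of the homogeneous graph sums analyzed by Oberdieck and Pixton, and to read off the weight from the number of propagators. By Remark \ref{rem-seriesforonepc} together with Theorem \ref{thm-Feynmanversions}, the series in question equals
$$\sum_{d_1} N^{\trop,\mathcal{P}}_{(d_1,d_2,g)} q^{d_1}= \frac{1}{|\Aut(\mathcal{P})|}\sum_{\Omega} I_{\mathcal{P},\Omega}(q)=\frac{1}{|\Aut(\mathcal{P})|}\sum_{\Omega} I'_{\mathcal{P},\Omega}(q),$$
so it suffices to show that the full sum $\sum_\Omega I'_{\mathcal{P},\Omega}(q)$ over all orders is homogeneous of weight $2r$. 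Note that Theorem \ref{thm-quasimod} already yields quasimodularity of each summand $I_{\mathcal{P},\Omega}(q)$, but only of mixed weight at most $2r$; the new content here is the cancellation of all lower-weight parts once we sum over $\Omega$.

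First I would reinterpret the sum over orders analytically. By Definition \ref{def-int}, each $I'_{\mathcal{P},\Omega}(q)$ is a path integral of $\prod_{k=1}^r \left(-p(z_{k^1}-z_{k^2},q)\right)$ over the horizontal period cycles $\gamma_j(t)=iy_j+t$, whose (infinitesimal, pairwise distinct) heights $y_j$ are arranged according to $\Omega$. Since the propagator $p$ is even, the integrand is independent of this arrangement, and running over all $n!$ orders $\Omega$ reassembles the ordered contour integrals into the \emph{symmetric} integral over the configuration space of $n$ marked points on the elliptic curve $E=\mathbb{C}/(\mathbb{Z}+\tau\mathbb{Z})$. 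It is this symmetric, genuinely elliptic object --- and not any single ordering --- that carries the full modular symmetry; the individual orderings break it and thereby acquire the spurious lower-weight contributions.

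Next I would invoke the analysis of Oberdieck and Pixton. Their Theorem 7 and the computations in Appendix A of \cite{OP17} (compare also \cite{GM16, BBBM13}) show that such a configuration-space integral of a product of propagators, indexed by the edges of a graph, is a \emph{homogeneous} quasimodular form whose weight is twice the number of propagators. Homogeneity is the key assertion: the propagator $p(z,q)=\frac{1}{4\pi^2}\wp(z,q)+\frac{1}{12}E_2(q^2)$ of Definition \ref{def-prop} is a weight-$2$ object (both $\wp$ and $E_2$ have weight $2$) and the cycle integration against the $z_j$ is weight-preserving, so each of the $r$ edges contributes weight $2$. The mixed-weight terms observed in Theorem \ref{thm-quasimod} and in Example \ref{ex-homogeneous_forms} are precisely the anomalies introduced by restricting to one ordering; they cancel in the sum over $\Omega$, which is exactly what the global computation of \cite{OP17} supplies.

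Finally I would convert $2r$ into the stated weight by counting edges: by Definition \ref{def-pearl} a pearl chain of type $(d_2,g)$ has $d_2+g-1$ black vertices, each $2$-valent, and every edge joins a black vertex to a white vertex, whence $r=2(d_2+g-1)$ and the weight is $2r=4(d_2+g-1)$. I expect the main obstacle to be the identification underlying Steps two and three: making the passage from $\sum_\Omega I'_{\mathcal{P},\Omega}$ to the global graph integral of \cite{OP17} precise --- matching the sum over contour orderings to their configuration-space integral and reconciling the normalization of our propagator with their weight conventions --- since homogeneity holds only for the assembled global integral and not order by order.
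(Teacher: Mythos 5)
Your proposal is correct and follows essentially the same route as the paper, which likewise reduces the statement to the identification of $\sum_{d_1} N^{\trop,\mathcal{P}}_{(d_1,d_2,g)} q^{d_1}$ with $\frac{1}{|\Aut(\mathcal{P})|}\sum_\Omega I_{\mathcal{P},\Omega}(q)$ and then cites Theorem 7 and Appendix A of \cite{OP17} for the homogeneity of the order-summed graph integral, combined with the edge count $r=2(d_2+g-1)$. Your additional discussion of why summing over orders reassembles the symmetric configuration-space integral is a reasonable elaboration of the step the paper leaves implicit in the citation.
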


\bibliographystyle {plain}
\bibliography {bibliographie}

\end{document}